\title[THE INTERSECTION GRAPH OF AN ORIENTABLE GENERIC SURFACE]{THE INTERSECTION GRAPH OF AN ORIENTABLE GENERIC SURFACE}
\author[D\,Ben Hadar]{Doron Ben Hadar}
\address{Department of Mathematics\\
Bar-Ilan university\\\newline
Ramat Gan\\IL}
\email{doron.ben@live.biu.ac.il}
\urladdr{math.biu.ac.il/node/641}
\theoremstyle{plain}
\newtheorem{thm}{Theorem}[section]
\newtheorem{lem}[thm]{Lemma}
\newtheorem{res}[thm]{Result}
\theoremstyle{definition}
\newtheorem{defn}[thm]{Definition}
\theoremstyle{remark}
\newtheorem{rem}[thm]{Remark}
\newtheorem{rems}[thm]{Remarks}
\def\R {{\mathbb R}}
\def\Z {{\mathbb Z}}
\let\c@equation\c@thm
\numberwithin{equation}{section}
\begin{document} 

\begin{abstract}
The intersection graph $M(i)$ of a generic surface $i\co F \to S^3$ is the set of values which are either singularities or intersections. It is a multigraph whose edges are transverse intersections of two surfaces and whose vertices are triple intersections and branch values. $M(i)$ has an enhanced graph structure which Gui-Song Li referred to as a "daisy graph" (see \cite{Li1} p.3721.) If F is oriented then the orientation further refines $M(i)$'s structure into what Li called an "arrowed daisy graph".

Li left open the question "which arrowed daisy graphs can be realized as the intersection graph of an oriented generic surface?". The main theorem of this article will answer this. I will also provide some generalizations and extensions to this theorem in sections 4 and 5.
\end{abstract}

\begin{asciiabstract}
The intersection graph M(i) of a generic surface i\co F \to S^3 is the set of values which are either singularities or intersections. It is a multigraph whose edges are transverse intersections of two surfaces and whose vertices are triple intersections and branch values. M(i) has an enhanced graph structure which Gui-Song Li referred to as a "daisy graph" (see \cite{Li1} p.3721.) If F is oriented then the orientation further refines M(i)'s structure into what Li called an "arrowed daisy graph".

Li left open the question "which arrowed daisy graphs can be realized as the intersection graph of an oriented generic surface?". The main theorem of this article will answer this. I will also provide some generalizations and extensions to this theorem in sections 4 and 5.
\end{asciiabstract}

\maketitle

\section{Introduction - The Structure of the Intersection Graph}

A (proper) generic surface in a 3-manifold is a generalization of an immersed surface in a general position. Specifically:

\begin{defn} \label{generic}
A (proper) generic surface in a 3-manifold $M$ is a smooth mapping $i\co F \to M$, where $F$ is a compact surface (called the underlying surface) and each value of $i$ has a neighborhood $U$ in $M$ such that $U \cap i(F)$ looks like one of the pictures in figure \ref{fig:Figure 1} (The purple part is $\partial M$).

\begin{figure}
\begin{center}
\includegraphics{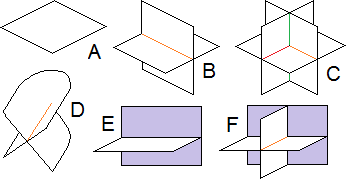}
\caption{The values of a generic surface}
\label{fig:Figure 1}
\end{center}
\end{figure}

[1A], [1B] and [1C] are (respectively) a regular value, a double value and a triple value. Locally they look like the intersection of 1, 2 or 3 of the coordinate planes in a neighborhood of the origin in $\R^3$. [1D] is a branch value, locally, it looks like the renowned "Whitney's Umbrella". [1E] is a regular boundary (RB) value and [1F] is a double boundary (DB) value, they resemble the intersection of 1 or 2 of the $[xz]$ and $[yz]$ planes, with the boundary (the $[xy]$ plane) in a neighborhood of the origin inside the upper half space.
\end{defn}

These surfaces are "proper" in the sense that $i(\partial F)=i(F) \cap \partial M$, and they are "generic" in the sense that every proper smooth function from a compact surface to $M$ can be turned into a proper generic surface via an arbitrarily small perturbation. They are also "stable" in the sense that a small enough perturbation can only change $i$ up to an isotopy of $F$ and $M$. If $F$ is closed, I call $i$ a "closed generic surface".

I am interested in the intersection graph of a generic surface. It is the set $M(i)=\overline{\{p \in i(F)| 1<|i^{-1}(p)|\}}$ of all values which are neither regular nor RB. I will regard the intersection graph in two ways, and each way has its own notations. I will also use specific conventions when I draw it.

\begin{defn} \label{consecutive}
1) First, I will regard the intersection graph as a multigraph whose vertices are the triple values (degree 6), branch values and DB values (degree 1) of $i$, and whose edges are the segments of a "double line" (a line consisting of double values, such as the orange line in figure \ref{fig:Figure 1}B) between two vertices. In addition to this "graph part", $M(i)$ may contain several "double circles" - double lines that close into circles instead of ending at vertices. Having no vertices or edges, double circles do not comply with the traditional definition of a graph, and need to be accounted for separately.

Note that $M(i)$ may have graph-theoretical "loops" - paths whose ends are both at the same vertex (which must be a triple value, as its degree is not $1$). Due to this, it will be important later on to distinguish between the two "ends of an edge". I use the term "half-edge" to describe such an end.

In figure \ref{fig:Figure 1}C I show that 3 "segments" of double line (marked in orange, red and green) intersect at each triple value. The intersection cuts each segment into a pair of half-edges that seem to be in continuation to one another. I say that such a pair of half edges is "consecutive". Each triple value has three disjoint pairs of consecutive half edges. Notice that these two half-edges may come originate in same edge, in which case said edge is a loop.

2) Secondly, I will regard a pair of consecutive half-edges to be parts of a long path that crosses the triple value. $M(i)$ is then the union of several of these long lines, which I call "double arcs". Three double arcs intersect at each triple value, but this number includes multiplicity - it may be that a double arc crosses itself at a triple value. Double arcs are thus immersed 1-manifolds in $M$, not embedded ones.

A double arc may, as in figures \ref{fig:Figure 1}D and F, end in a branch value or a DB value on each side of it. Otherwise, it may close up into a circle. I refer to arcs of the latter kind as "closed" and to the former type as "open".

According to the "double arc" notation, a double circle is just a closed double arc that does not pass any triple values. A closed double arc that passes a triple value only once is simply a loop whose two half-edges are consecutive.

3) When drawing a diagram of the intersection graph, make sure that each triple value looks like the intersection of three lines, as in figure \ref{fig:Figure 2}A and B. In this way one can see which half-edges are consecutive, and what the double arcs are. A "purple point" at an end of an edge symbolizes that this edge ends in a DB value, as opposed to a branch value. Double circles are, of course, drawn as circles disjoint from the graph part.
\end{defn}

\begin{figure}
\begin{center}
\includegraphics{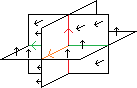}
\caption{Some examples of daisy graphs and arrowed daisy graphs}
\label{fig:Figure 2}
\end{center}
\end{figure}

Many authors have studied the intersection graph from various angels. For instance, in \cite{Izu&Mar1}, Izumiya and Marar showed a connection between the Euler char of the (image of the) closed generic surface, and the Euler char of its underlying surface. $\chi(i(F))=\chi(F)+T(i)+\frac{1}{2}B(i)$ where $T(i)$ is the number of triple values and $B(i)$ is the number of branch values of the surface (they called the latter "branch points", hence $B$). This generalized an earlier result by Carter and Ko (\cite{Car&Ko1}) which in turn generalized a result by Banchoff (\cite{Ban1}).

I am interested in the intersection graph because it encodes several important properties of the surface. For instance, Giller (\cite{Gil1}) showed that examining the intersection set can tell us if a generic surface in $\R^3$ can be lifted into an embedded surface in $\R^4$. Several other authors, including Carter and Saito (\cite{Car&Sai1}), and Satoh (\cite{Sat1}), have looked into the connection between liftings and the intersection graph.

One can, to some extent, classify generic surfaces according to their intersection graph. An early example of this can be found in \cite{Cro&Mar1}, where Cromwell and Marar classified the kind of surface in $\R^3$ that can have an intersection graph of a certain form (connected and with only one vertex which is a triple value).

In this article I address the case in which both the 3-manifold $M$ and the underlying surface $F$ are oriented. In this case, one can add more data to a diagram of the intersection graph. To see this, one must first recall the notion of a co-orientation:

\begin{defn} \label{CoOr}
A co-orientation on a generic surface $i\co F \to M$ is a continuous choice, for every non branch value $x$, of a normal vector in $T_{i(x)}M$ that is orthogonal to to $D_i(T_xF)$. If $M$ is oriented then there is a 1-1 correspondence between orientations on $F$ and co-orientations on $i$. It matches each orientation on $F$ with the normal $\overrightarrow{n}$ for which $(D_i(\overrightarrow{v_1}),D_i(\overrightarrow{v_2}),\overrightarrow{n})$ upholds the orientation of $M$ whenever $(\overrightarrow{v_1},\overrightarrow{v_2})$ upholds that of $F$. 
\end{defn}

I use co-orientations to indicate the orientation of a generic surface in illustrations. In particular, figure \ref{fig:Figure 3} depicts the neighborhood of a triple value. Notice that each of the three "double arc segments" that pass through a triple value a) consists of the intersection of two of the three planes that intersect in the value, and b) intersects the remaining plane transversally. The normal arrows on this last plane point toward one "preferred direction" on this arc or, equivalently, toward one of the two consecutive half-edges. I refer to the half-edge the arrow points toward as the "preferred" one of the two.

\begin{figure}
\begin{center}
\includegraphics{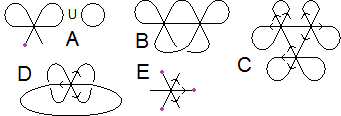}
\caption{The preferred directions at a triple value}
\label{fig:Figure 3}
\end{center}
\end{figure}

I can encode this information in a diagram of $M(i)$. I use a small arrow to mark the preferred direction on each intersecting arc segment at each triple value as in figures \ref{fig:Figure 2}C, D and E. I can formally define a type of combinatorial structure that encodes the relevant information about $M(i)$. This is a generalization of definitions made by Li in \cite{Li1} (p. 3721 and 3723).

\begin{defn} \label{Graph1}
1) A daisy graph (DG) $(V,E,n,B,C)$ is a 5-tuple where $(V,E)$ is multigraph whose vertices are all of degree 1 or 6. $n$ is a non-negative integer. $B$ is a subset of the set of degree-1 vertices, and, for each degree 6 vertices $v$, $C(v)$ is a division of the set of the half-edges of $v$ into three pairs.

For the DG of a generic surface, $n$ will indicate the number of double circles the surface has, $B$ is the set of DB values (the other degree 1 vertices are branch values), and for each triple value $v$, the three pairs in $C(v)$ are the three pairs of consecutive half-edges. In the interest of convenience, I call the vertices of any DG triple values, branch values, and DB values, in accordance with their degrees and belonging to $B$. I call a pair of half-edges in $C(v)$ consecutive. I draw a DG according to the conventions of definition \ref{consecutive}(3).

2) An arrowed daisy graph (ADG) $(V,E,n,B,C,A)$ is a 6-tuple where $(V,E,n,B,C)$ is a DG and for each triple value $v$ and each pair $p \in c(v)$ $A(v)$ is one of the half-edges in $p$, which I call the "preferred half-edge". In diagrams I mark each preferred half-edge with an arrow as in figures \ref{fig:Figure 2}C, D and E. In the ADG of an oriented generic surface I choose the preferred half-edges as per the co-orientation, as explained above.
\end{defn}

\begin{rem} \label{crossing}
1) Li assumed the surface is an immersion of a closed surface, so he did not have branch values or DB values.

2) Despite the similarity to graphs on surfaces, daisy graphs are not planar. One arc can go "above" another. I mark it as a crossing in a knot (See figures \ref{fig:Figure 2} B,D and E) to avoid confusion, but it is not a real crossing - it does not matter which arc is higher and which is lower.
\end{rem}

In \cite{Li1}, Li found out which DGs can be realized as the intersection graph of an orientable generic surface in $\R^3$. He defined the ADG in order to answer this, but this led to a new question: which ADGs can be realized as the intersection graph of an oriented generic surface in a given oriented 3-manifold $M$? (Li posted this as an open question, see \cite{Li1} p 3725). The main purpose of this article is to answer this question. It turns out that there are two inherently different cases. The case where $H(M;\Z)$ is periodic (all its elements have a finite order), and the cases where it is not. I solve the first case in sections 2 and 3 and the second case in section 4. In section 5, I will discuss a refinement of the notion of ADGs.

\begin{rem}
The solution I give in this paper is not fully constructive. Specifically, when I prove that an ADG is realizable, I only construct a part of the realizing surface. I then use arguments of homology and surgery to prove that this part can be extended to a whole surface. I have by now found several ways to construct an entire surface, but they are unneeded here, and will only lengthen the proof.

I will explain one of these constructions in a subsequent article, \cite{BH1}, where I will show that the problem of determining if a generic surface is liftable into an embedded surface in 4-space (a knotted surface) is NP complete. The purpose of the \cite{BH1} article only requires me to construct a realizing surface for a very specific type of realizable ADGs, but the construction given therein can be easily generalized to fit all constructible ADGs.
\end{rem}

\section{Gradings and Winding numbers}

\begin{defn} \label{GraphGrade}
1) Let $G$ be an ADG. I say that an edge $e$ is "preferred" (resp. "non-preferred") at a vertex $v$ if one of the ends of $e$ is a preferred (resp. non-preferred) half-edge at the vertex $v$.

2) A grading of an ADG $G$ is a choice of a number $g(e) \in \Z$ (called the grade of $e$) for every edge $e$ of $G$, such that, at each triple value $v$, all the non-preferred edges at $v$ have the same grade $a(v)$ and all preferred edges have the grade $a(v)+1$. An ADG that has a grading is called "gradable".

The grading concerns only the "graph part" of the ADG and ignores the double circles. Since double circles pose no obstruction to gradability, I consider an ADG that consists solely of double circles to be gradable.
\end{defn}

\begin{figure}
\begin{center}
\includegraphics{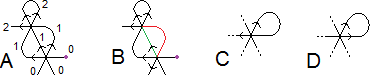}
\caption{The obstructions to gradability of an ADG}
\label{fig:Figure 4}
\end{center}
\end{figure}

Figure \ref{fig:Figure 4}A depicts a graded ADG. The ADG in \ref{fig:Figure 4}D is not gradable. The reason for this is that the red and green edges are both non-preferred at the upper triple value, implying that they ought to have the same grade, but at the bottom triple value one of them is preferred and the other is not, implying that they ought to have different grades. I will discuss the connection between the gradability and the realizability of an ADG shortly. However, I will being by explaining the factors that make an ADG gradable. 

\begin{defn} \label{GradeObs}
A "grade obstructing" loop of an ADG is a loop (a path whose ends both lay on the same vertex $v$) with one preferred end and one non-preferred end at $v$. For example, the loop in figure \ref{fig:Figure 4}C is grade obstructing, while the loop in figure \ref{fig:Figure 4}D is not.
\end{defn}

\begin{rem} \label{GradeObsRem}
1) A gradable ADG cannot have grade obstructing loops, since the grade of such a loop would have to be $a(v)=g(e)=a(v)+1$. 

2) If an ADG has no grade obstructing loops, then the sets of preferred edges at $v$ and non-preferred edges at $v$ are mutually exclusive. This simplifies the following definition.
\end{rem}

\begin{defn} \label{RelGrade}
1) Given a DADG $G$ with no grade obstructing loops, and two edges $e$ and $f$ that share a vertex $v$ (which must be a triple value since its degree cannot be 1), define the "grading difference" $\Delta g(e,v,f)$ to be $1$ if $f$ is preferred at $v$ and $e$ is not, $-1$ if it is the other way around, and $0$ if either both $f$ and $g$ are preferred or if they are both non-preferred.

2) The grading difference of a path $e_0,v_0,e_1,v_1,...,v_{r-1},e_r$ in $G$ is the sum $\sum_{k=1}^r(\Delta g(e_{k-1},v_{k-1},e_k)$.
\end{defn}

\begin{lem} \label{GradeRem}
1) If an ADG has a grading $g$, then the grading difference of a path $e_0,v_0,e_1,v_1,...,v_{r-1},e_r$ is equal to $g(e_r)-g(e_0)$.

2) An ADG is gradable iff it has no grade obstructing loop and, for every pair of edges $e$ and $f$, every path between $e$ and $f$ has the same grading difference.

3) One can check if an ADG $G$ is gradable, and therefore construct a grading, in linear $O(|E|)$ time where $E$ is the set of $G$'s edges.
\end{lem}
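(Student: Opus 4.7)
The plan is to prove the three parts in order, with part (1) feeding into (2), and (2) giving the correctness statement that part (3) implements algorithmically.

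For part (1), I would proceed by induction on the length $r$ of the path. The base case $r=0$ is trivial since the empty sum is zero and $g(e_0)-g(e_0)=0$. For the inductive step, it suffices to verify that for any single-edge step, $\Delta g(e_{k-1},v_{k-1},e_k) = g(e_k)-g(e_{k-1})$. This is a direct case check from Definitions 2.2 and 2.5: if $e_k$ is preferred and $e_{k-1}$ is not at $v_{k-1}$, then by the grading condition $g(e_k)=a(v_{k-1})+1$ and $g(e_{k-1})=a(v_{k-1})$, so the difference is $+1$, matching $\Delta g$; the symmetric and ``both preferred / both non-preferred'' cases are analogous. Telescoping then finishes it.

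For part (2), the forward direction is immediate: a gradable ADG has no grade obstructing loop by Remark 2.3(1), and part (1) forces every path from $e$ to $f$ to have grading difference $g(f)-g(e)$, which depends only on the endpoints. For the converse, assume $G$ has no grade obstructing loop and path-independence of grading differences. I would define $g$ one connected component of the graph part at a time. Pick any edge $e_0$ in the component, assign $g(e_0)$ arbitrarily (say $0$), and for every other edge $f$ in the component set $g(f) := g(e_0) + \Delta g(\text{any path from }e_0 \text{ to } f)$. Path-independence makes $g$ well defined. It remains to check that $g$ really is a grading: at each triple value $v$, I must show all non-preferred edges at $v$ share a common value $a(v)$ and all preferred edges share the value $a(v)+1$. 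Given any two edges $e,f$ incident to $v$, the single-vertex ``path'' $e,v,f$ shows $g(f)-g(e) = \Delta g(e,v,f)$, and the case analysis of $\Delta g$ precisely yields the required structure (using the absence of grade obstructing loops to ensure no edge is simultaneously preferred and non-preferred at $v$, as noted in Remark 2.3(2)).

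For part (3), the criterion in (2) is checkable by a standard graph traversal. I would build the simple graph whose nodes are the edges of $G$ and whose arcs record incidence at a common triple value labelled with the value of $\Delta g$, then run a BFS/DFS on each connected component from an arbitrary root, assigning each visited edge a tentative grade via the running sum of $\Delta g$'s along the BFS tree. For every non-tree adjacency (and for every self-adjacency, which would witness a grade obstructing loop), verify that the discrepancy is $0$; any nonzero discrepancy certifies non-gradability, and no discrepancy means $g$ is a valid grading by part (2). Each triple value contributes a bounded number of local incidences (at most $\binom{6}{2}=15$), and each edge is touched a bounded number of times, so the total work is $O(|E|)$.

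The main obstacle I expect is the well-definedness and verification step inside part (2): one must translate the global ``path-independence'' hypothesis into the local triple-value condition required by Definition 2.5. The absence of grade obstructing loops is essential here to preclude the degenerate case where an edge is forced to be both preferred and non-preferred at the same vertex, and this is exactly why Remark 2.3 singles it out as a separate necessary condition rather than a consequence of path-independence alone.
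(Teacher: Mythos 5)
Your proposal is correct and follows essentially the same path as the paper: part (1) by a single-step case check from Definitions \ref{GraphGrade} and \ref{RelGrade} plus induction (telescoping), part (2) forward direction from Remark \ref{GradeObsRem} and part (1), converse by picking a base edge per component, propagating grades along paths, and verifying the local grading condition via the one-vertex path $e,v,f$, and part (3) by a bounded-degree BFS/DFS with on-the-fly consistency checks. The only cosmetic difference is that you phrase the traversal in part (3) on a line-graph-like structure (nodes are edges of $G$), while the paper traverses $G$ directly, marking vertices as ``reached'' and ``exhausted''; both give the same $O(|E|)$ bound.
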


\begin{proof}
1) For a short part $e,v,f$ this follows directly from definitions \ref{GraphGrade} and \ref{RelGrade}(1). Induction implies the general case.

2) ($\Leftarrow$): The first part is remark \ref{GradeObsRem}, and the second follows from (1).

($\Rightarrow$): For every connected component $G'$ of $G$ (that is not a double circle), do the following: choose one edge $e$ in $G'$ and give it the grade $0$. Next, for every other edge $f$ in $G'$ choose a path $e=e_0,v_0,e_1,...,e_r=f$ and set the grade $g(f)$ of $f$ to be the relative grade of this path. By assumption this is independent of the path. If $f$ shares a vertex $v$ with another edge $h$, then $e=e_0,v_0,e_1,...,e_r=f,v,h$ is a path from $e$ to $h$, and so $g(h)=\sum_{k=1}^r(\Delta g(e_{k-1},v_{k-1},e_k)+\Delta g(f,v,h)=g(f)+\Delta g(f,v,h)$. This holds for every adjacent pair of edges. In particular, if $v$ is a vertex and $f$ is non-preferred at $v$, then, for the number $a(v)=g(f)$, every non-preferred edge $h$ at $v$ upholds $g(h)=g(f)+\Delta g(f,v,h)=a(v)$ and every preferred edge $h$ at $v$ upholds $g(h)=g(f)+\Delta g(f,v,h)=a(v)+1$, and so $g$ is a grading.

3) It takes $O(|E|)$ time to go over the edges of $G$ and check if any of them is a grade-obstructing loop. If no such loop exists, I will assign each edge $f$ of $G$ a number $g(f)$ which, if the graph is gradable, will be a grading. I say that the algorithm "reached" (resp. "exhausted") a vertex if it assigned a grading to at least one (resp. all) of the edges of this vertex. I begin by choosing one edge $e$ and grading it $g(e)=0$. For each vertex of $e$, I set $a(v)=g(e)-1=-1$ / $a(v)=g(e)=0$ if $e$ is respectively preferred / non-preferred at $v$.

Next, I choose a vertex $v$ that the algorithm has reached but has not exhausted (currently, this means that $v$ is one of the vertices of $e$) and go over the edges of $v$. If a preferred / non-preferred edge $f$ has yet to be graded, then grade it $g(f)=a(v)+1$ / $g(f)=a(v)$ respectively, then look at the other vertex $w$ of $f$. If this is the first time the algorithm reaches $w$, set $a(w)=g(f)-1$ / $a(w)=g(f)$ if $f$ is respectively preferred / non-preferred at $w$. If the algorithm reached $w$ before, then $a(w)$ has already been set previously. In order for $g$ to be a grading, $w$ must uphold $a(w)=g(f)-1$ / $a(w)=g(f)$, depending on if $f$ is preferred at $v$ or not. Check if this equality holds.

If the equality holds, move on to the other edges of $v$ and do the same. Since $v$ has no more than 6 edges, this takes $O(1)$ time. When you have exhausted $v$, move on to another vertex $G$ that the algorithm has reached but has yet to exhaust. Continue like this until either a) you grade an edge $f$ whose "other vertex" $w$ has already been reached and for which the appropriate equality $a(w)=g(f)-1$ / $a(w)=g(f)$ fails, or b) if you have not reached such an edge but there are no more vertices that the algorithm reached but has yet to exhaust.

If you stop because of (a) then $G$ is not gradable. In order to see this, notice that if you reached a vertex $v$ via an edge $e_v$, and then you grade another edge $f$ at $v$, then $g(f)=\Delta (g_v,v,f)+g(e_v)$. This can be proven on a case per case basis. For instance, if both $f$ and $e_v$ are preferred at $v$, then $\Delta (g_v,v,f)=0$ and according to the above $a(v)=g(e_v)-1$ and $g(f)=a(v)+1=g(e_v)=\Delta (g_v,v,f)+g(e_v)$ as required. Induction implies that every $f$ that the algorithm grades has a path $e=e_0,v_0,e_1,...,e_r=f$ such that $g(f)$ is equal to the grading difference of this path. Indeed, it holds for $e$ itself, and if you assume that it holds for every edge you graded before, and in particular for $e_v$, then $g(e_v)$ is equal to the grading difference of the path $e=e_0,v_0,e_1,...,e_r=g_v$ and $g(f)=g(e_v)+(g(f)-g(e_v))=\sum_{k=1}^r(\Delta g(e_{k-1},v_{k-1},e_k)+\Delta (g_v,v,f)$ - the grading difference of the path $e=e_0,v_0,e_1,...,e_r,v,f$.

Now, if you grade an edge $f$ whose other vertex $w$ has already been reached, and the appropriate equality $a(w)=g(f)-1$ / $a(w)=g(f)$ fails, then similar considerations imply that $g(f) \neq \Delta (g_w,w,f)+g(e_w)$. I have proven that there is one path from $e$ to $f$ whose grading difference is equal to $g(f)$, but there is a another such path $e=h_0,w_0,h_1,...,w_{r-1},h_r=e_w,w,f$, for which $g(e_w)=\sum_{k=1}^r(\Delta g(h_{k-1},w_{k-1},h_k)+\Delta (g_v,v,f)$ but $g(f)=g(e_w)+(g(f)-g(e_w)) \neq \sum_{k=1}^r(\Delta g(h_{k-1},w_{k-1},h_k)+\Delta (g_w,w,f)$. Since these two paths have different grading differences, (2) implies that $G$ is not gradable.

If the algorithm stopped because of (b) then it provided a grading $g(f)$ for every edge $f$ in the connected component of $G$ that contains $e$. Since the equality never failed, every vertex $v$ and every preferred / non-preferred edge $f$ at $v$ upholds $a(v)=g(f)-1$ / $a(v)=g(f)$. This means that $g$ is indeed a grading of this connected component. If there are any vertices left that the algorithm hasn't reached yet, then they belong to a different connected component. Choose a new ungraded edge $e$ and grade it $g(e)=0$, and then proceed to grade its connected component. Eventually, either you will reach stop condition (a), meaning that $G$ is not gradable, or you will exhaust all the vertices of $G$, in which case you finished grading all of $G$.

In total, the algorithm went over every edge $f$ of $G$, determined $g(f)$, and either determined $a(w)$ for one or both of its vertices, or checked if it upheld the equity $a(w)=g(f)-1$ / $a(w)=g(f)$. This takes $O(|E|)$ time.
\end{proof}

\begin{rem} \label{tree}
If the graph part of an ADG $G$ is a forest, then the algorithm will never reach the stop condition (a), and so $G$ is gradable.
\end{rem}

I can now formulate the main theorem:

\begin{thm} \label{ADGThm}
Let $M$ be an oriented 3-manifold for which $H(M;\Z)$ is periodic.

1) If $M$ has no boundary, then an ADG $G$ can be realized as the intersection graph of an oriented generic surface $i\co F \to M$ iff $G$ is gradable and has no DB values.

2) If $M$ has a boundary, then an ADG $G$ can be realized as the intersection graph of an oriented generic surface $i\co F \to M$ iff $G$ is gradable.
\end{thm}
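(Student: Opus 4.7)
The strategy exploits that periodic $H(M;\Z)$ forces $H^1(M;\Z)=0$ by universal coefficients and, dually, $H_2(M,\partial M;\Z)=0$ by Lefschetz duality.

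\textbf{Necessity.} Let $i\co F \to M$ realize $G$. Since $[i(F)] \in H_2(M,\partial M;\Z) = 0$, we have $i(F) = \partial C$ for a 3-chain $C$; the coefficients of $C$ on the chambers of $M \setminus i(F)$ give a locally constant \emph{winding function} $w\co M \setminus i(F) \to \Z$ that jumps by $+1$ across $i(F)$ in the direction of the co-orientation. The four chambers adjacent to each edge $e$ carry values $n, n+1, n+1, n+2$ for some $n$; define $g(e) := n$. At each triple value $v$, modeling the three sheets as coordinate planes with co-orientations $\epsilon_i e_i$ (where $\epsilon_i = \pm 1$), the winding function on the octant with sign vector $\sigma$ is $w(\sigma) = n + \#\{i : \sigma_i = \epsilon_i\}$. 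A direct check on the eight octants verifies that the three preferred half-edges at $v$ lie on edges of grade $n+1$ and the three non-preferred half-edges lie on edges of grade $n$. Hence $g$ is a grading with $a(v) = n$. Finally, DB values are excluded in case (1) because they require $\partial M \ne \emptyset$.

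\textbf{Sufficiency.} Let $G$ be gradable with chosen grading $g$ (no DB values if $\partial M = \emptyset$). First, embed the underlying graph of $G$ in $M$, placing DB vertices on $\partial M$. Inside a regular neighborhood $N(G)$, place the local surface pictures of Figure~\ref{fig:Figure 1} at each vertex and along each edge, co-oriented so that on the four chambers adjacent to each edge $e$ the induced locally constant winding function $w_0\co N(G) \setminus F_0 \to \Z$ takes the four values $g(e), g(e)+1, g(e)+1, g(e)+2$. Gradability is precisely the condition ensuring that these local prescriptions fit consistently at every triple value, producing a co-oriented partial surface $F_0 \subset N(G)$ with intersection graph $G$. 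Let $\gamma := \partial F_0 \cap \partial N(G)$. Since $w_0$ restricts to a locally constant integer function on $\partial N(G) \setminus \gamma$ with $\pm 1$ jumps across $\gamma$, Poincar\'e--Lefschetz duality on the surface $\partial N(G)$ yields $[\gamma] = 0$ in the relevant relative $H_1$. Hence $\gamma$ bounds a properly embedded oriented surface $F_1 \subset X := \overline{M \setminus N(G)}$, with co-orientation induced by the jumps of $w_0$ along $\gamma$.

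\textbf{Main obstacle: global compatibility and use of periodicity.} The heart of the argument is to verify that $F := F_0 \cup F_1$ really realizes $G$: specifically, that its co-orientation admits a coherent global winding function. The co-orientations of $F_0$ and $F_1$ already agree on $\gamma$ by construction, but one must show that $w_0$ extends to a globally defined integer-valued winding function $w$ on $M \setminus F$. This is precisely where periodicity enters: because $H^1(M;\Z) = 0$, the cooriented surface $F$ is null-cohomologous, hence admits a $\Z$-valued winding function $w$ globally, unique up to a constant. Fixing that constant so that $w = w_0$ on one chamber of $N(G) \setminus F_0$ forces $w = w_0$ everywhere in $N(G) \setminus F_0$, since both are locally constant and jump identically across $F_0$. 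A final general-position and surgery adjustment inside $X$ arranges that $F_1$ meets $F_0$ only in $\gamma$ and introduces no new triple, branch, or DB values, so that the intersection graph of $F$ is exactly $G$. In the non-periodic case, the winding function would only exist modulo some period, which is why Section~4 must formulate extra conditions for the realizability problem.
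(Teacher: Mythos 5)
Your necessity argument is essentially the paper's: periodic $H_1(M;\Z)$ gives a winding function on $M\setminus i(F)$, and the pattern of winding numbers around double lines and triple values produces a grading. (The paper gets the winding function directly from the intersection pairing; you go through $H^1(M;\Z)=0$ and Lefschetz duality, which implicitly assumes $M$ compact -- a condition the paper avoids for this direction -- but either route works in practice.)

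For sufficiency, your plan (build the cross-surface $F_0$ in $N(G)$, show $\gamma=\partial F_0$ is null-homologous in the complement $X$, cap off by an embedded $F_1\subset X$) is a legitimate restructuring of the paper's proof: the paper realizes $G$ in $S^3$ first using meridians and linking numbers in $\overline{S^3\setminus H}$, then transplants $S^3$-balls into $M$; you work in $M$ directly with $\partial N(G)$. But there is a genuine gap exactly at the sentence ``Gradability is precisely the condition ensuring that these local prescriptions fit consistently at every triple value.'' Local consistency at each triple value is built into the cross-surface; what you actually need is \emph{global} consistency -- that a single $w_0\colon N(G)\setminus F_0\to\Z$ exists, i.e.\ that any loop in $N(G)\setminus F_0$ has zero signed intersection with $F_0$. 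Since $N(G)$ is a handlebody and $H_1(N(G))$ is generated by simple cycles of $G$, this reduces to showing that for every simple cycle $C$, the pushed-off $C$ has intersection number $0$ with $F_0$. This is precisely the content of the paper's Lemma \ref{partial}, whose proof requires a case-by-case analysis at each triple value (both half-edges preferred, one preferred, neither preferred) showing the local contribution is $g(e_i)-g(e_{i-1})$ so the total telescopes to zero. You assert this without proof, and without it, the subsequent claim that $w_0$ restricts to a jump function on $\partial N(G)\setminus\gamma$ has no basis.

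The ``main obstacle'' paragraph misidentifies what remains to be shown. Once $F_1$ is a properly embedded surface in $X$ with $\partial F_1=\gamma$, the union $F_0\cup F_1$ is automatically a generic surface whose intersection set equals that of $F_0$ -- that is, it realizes $G$ -- since $F_1$ is embedded and meets $F_0$ only in $\gamma$. You do not need to verify that $F$ admits a global winding function to conclude realizability; that existence follows a posteriori from the necessity direction. Periodicity of $H_1(M;\Z)$ enters the paper's sufficiency argument in a different place (through the $S^3$ construction, or in your version not at all beyond what is already packaged into bounding $\gamma$); it is not an extra coherence condition on $F$. Finally, your last sentence about Section 4 is wrong: in the non-periodic case the paper shows \emph{every} ADG (with the obvious DB constraint) is realizable, so the realizability problem becomes \emph{unconditional}, not subject to further conditions.
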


\begin{res} \label{GradeRes}
In \cite{Li1}, Li showed that a DG with no DB values or branch values is realizable iff any arc in it is composed of an even number of edges. Theorem \ref{ADGThm} implies a generalization of this -  a general DG is realizable via an orientable generic surface iff every \underline{closed} arc is composed of an even number of edges.
\end{res}

\begin{proof} (of result \ref{GradeRes})
If a DG is realizable via an orientable generic surface, then any orientation of the surface gives the DG an ADG structure (arrows), and this ADG is realizable and therefore gradable. The grading of each subsequent edge on an arc will have a different parity than the grading of the previous edge and, in particular, closed arcs must have an even number of edges on them.

On the other hand, given a DG that upholds this condition (every closed arc must have an even number of edges), it is possible to give the DG a "short grading" - number the edges with only 0 and 1 in such a way that consecutive edges have different numbers. Clearly, the only obstruction to this is the existence of closed arcs with an odd number of edges. Now, one half-edge in every consecutive pair will belong to an edge whose grade is 1, and the other will belong to an edge whose grade is 0. You can give the DG an ADG structure that matches this grading by choosing the former half-edges to be preferred. This graded ADG is realizable, and in particular, the underlying DG is realizable via an orientable surface.
\end{proof}

In the remainder of this section I will prove the "only if" direction of the articles of theorem \ref{ADGThm}. The "if" direction will be proven in the next section. One part of the "only if direction" is trivial - a generic surface in a bounderyless 3-manifold cannot have DB values. In order to prove the other part, that the intersection graph of a generic surface is a gradable ADG, I use 3-dimensional winding numbers:

\begin{defn} \label{FaceBody}
Let $i\co F \to M$ be a proper generic surface in a 3-manifold $M$.

1) A face (resp. body) of $i$ is a connected component of $i(F) \setminus M(i)$ (resp. $M \setminus i(F)$).

2) Each face $V$ is an embedded surface in $M$, and there is a body on each side of it. I say these two bodies are adjacent (via $V$). A priori, it is possible that these two bodies are in fact two parts of the same body, and even that $V$ is a one-sided surface. In these cases, this body will be self adjacent, but this does not happen in any of the cases I am interested in.

3) If $i$ has a co-orientation, then each face $V$ is two sided, and the arrows on the face point towards one of its two sides. I say that the body on the side that the arrows point toward is "greater" (via $V$) than the body on the other side of $V$.

4) A choice of "winding numbers" for $i$ is a choice of an integer $w(U) \in \Z$, for every body $U$ of $i$, such that if $U_1$ and $U_2$ are adjacent, and $U_1$ the greater of the two, then $g(U_1)=g(U_2)+1$.
\end{defn}

\begin{lem} \label{Winding}
If $M$ is a connected and orientable 3-manifold, $H_1(M;\Z)$ is periodic, and $i\co F \to M$ is a co-oriented generic surface, then $i$ has a choice of winding numbers.
\end{lem}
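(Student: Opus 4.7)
The plan is to construct winding numbers by counting signed intersections of generic paths with the co-oriented surface, and to reduce the consistency of this recipe to the vanishing of a homomorphism $H_1(M;\Z)\to\Z$.

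First I would fix a base body $U_0$ and set $w(U_0)=0$. For any other body $U$, I would choose a smooth path $\gamma$ from a point of $U_0$ to a point of $U$ that is transverse to the two-dimensional regular stratum of $i(F)$ and avoids the one-complex $M(i)$. Such a $\gamma$ exists because $M$ is connected and a generic $1$-dimensional arc avoids a $1$-complex in a $3$-manifold. I would then set $w(U):=I(\gamma)$, where $I(\gamma)=\sum_p\varepsilon_p$ runs over the transverse intersections of $\gamma$ with faces, weighted by $\varepsilon_p=+1$ if $\gamma$ crosses in the co-orientation direction at $p$ and $-1$ otherwise. The required incidence $w(U_1)=w(U_2)+1$ for $U_1$ greater than $U_2$ across a face $V$ then follows immediately by concatenating a short arc across $V$ in the co-oriented direction.

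The heart of the proof is showing that $w(U)$ does not depend on the choice of $\gamma$. Two such paths join into a generic loop $\alpha$, so it suffices to establish that $I(\alpha)=0$ for every generic loop $\alpha$ in $M$. For this I would appeal to homological intersection theory: the orientations on $F$ and on $M$ yield a class $i_*[F,\partial F]\in H_2(M,\partial M;\Z)$, and for a generic $\alpha$ the signed count $I(\alpha)$ is precisely the value of the intersection pairing
\[ H_1(M;\Z)\times H_2(M,\partial M;\Z)\longrightarrow H_0(M;\Z)=\Z \]
on $([\alpha],\,i_*[F,\partial F])$. Hence $\alpha\mapsto I(\alpha)$ factors through a homomorphism $\phi\co H_1(M;\Z)\to\Z$.

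The conclusion is then automatic: $H_1(M;\Z)$ is periodic by hypothesis and $\Z$ is torsion-free, so $\phi$ must be identically zero, forcing $I(\alpha)=0$ for every generic loop. The step I expect to require the most care is the identification of the naive count $I$ with the homological pairing in the presence of the singular strata of $i(F)$. A more hands-on alternative would be to prove homotopy-invariance of $I$ directly, by perturbing a null-homotopy of $\alpha$ to meet the strata of $M(i)$ transversely and checking locally that a small meridian around each type of stratum (double edge, triple value, branch value, or DB value) has zero signed intersection; the prototype is the meridian of a double edge, whose two sheets contribute opposite signs to $I$, and the other cases are analogous.
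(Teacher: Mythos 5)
Your proposal is correct and follows essentially the same strategy as the paper: define $w(U)$ by a signed transverse intersection count along a generic path from a base body, and reduce well-definedness to the vanishing of the homological intersection pairing of $[\alpha]\in H_1(M;\Z)$ with $i_*[F,\partial F]$, which must vanish since $H_1(M;\Z)$ is periodic and $\Z$ is torsion-free. You spell out the factoring through a homomorphism $H_1(M;\Z)\to\Z$ a bit more explicitly than the paper (which phrases the same fact as ``nonzero intersection number would give a class of infinite order''), but the underlying argument is identical.
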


\begin{proof}
Pick one body $U_0$ to be "the exterior" of the surface and set $w(U_0)=0$. Next, define the winding numbers for every other body $U$ like so:

Take a smooth path from $U_0$ to $U$ that is in general position to $i$ (it intersects $i(F)$ only at faces, and does so transversally), and set $w(U)$ to be the signed number of times it crosses $i(F)$, the number of times it intersects it in the direction of the co-orientation minus the times it crosses it against the co-orientation. This is well defined, since any two such paths $\alpha$ and $\beta$ must give the same number. Otherwise, the composition $\beta^{-1} \ast \alpha$ is a 1-cycle whose intersection number with the 2-cycle represented by $i$ is non-zero. This implies that this 1-cycle is of infinite order in $H_1(M;\Z)$ - contradicting the fact that this group is periodic.

It is also clear that if $U_1$ and $U_2$ are adjacent and $U_1$ is the greater of the pair, then $g(U_1)=g(U_2)+1$.
\end{proof}

\begin{rems} \label{WNRems}
1) It is clear that two different choices of "winding numbers" for $i$ will differ by a constant, and that the one I created is unique in satisfying $w(U_0)=0$.

2) I can do a similar process on a loop $\gamma$ in $\R^2$ instead of a surface in a 3-manifold. If I choose the component $U_0$ of $\R^2 \setminus Im(\gamma)$ to be the actual exterior, then this will produce the usual winding numbers - $w(U)$ will be the number of times $\gamma$ winds around a point in $U$. 
\end{rems}

I will use the winding numbers to induce a grading in the following manner: the neighborhood of a double value includes 4 bodies, with the possibility that some of them are, in fact, different parts of the same body. If the surface has a co-orientation and winding numbers, then there is a number $g$ such that two of these bodies have the WN $g$, one has the WN $g+1$ and one has the WN $g-1$. Figure \ref{fig:Figure 5}A depicts this:

\begin{figure} \label{fig:Figure 5}
\begin{center}
\includegraphics{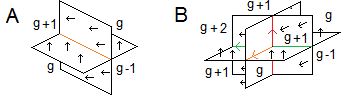}
\caption{The winding number of bodies around an edge of $M(i)$ and a triple value}
\label{fig:Figure 5}
\end{center}
\end{figure}

Due to continuity, this will be the same value $g$ for all the double values on the same edge (or double circle). I call this number the grading of the edge, and name the grading of an edge $e$ $g(e)$. This is indeed a grading in the sense of definition \ref{GraphGrade}. In order to prove this, I need to show that at every triple value of the surface all the preferred half-edges have the same grading, which is greater by 1 than the grading of all the non-preferred ones. This can be seen in figure \ref{fig:Figure 5}B, which depicts the winding numbers of the bodies around an arbitrary triple value. Indeed, you can see that the preferred half-edges - the ones going up, left and outwards (toward the reader) have the grading $g+1$, while the other edges have the grading $g$. This proves the "only if" direction of theorem \ref{ADGThm}.

\section{Realizing Graded Arrowed Daisy Graphs}

In order to prove the "if" direction of theorem \ref{ADGThm} I will first prove a partial result. I will limit the discussion to connected ADGs with no DB values.

\begin{lem} \label{NoDB}
Every \underline{connected}, \underline{gradable} ADG $G$ \underline{without DB values} has a closed generic surface $i\co F \to S^3$ such that the intersection graph of $i$ is equal, as an ADG, to $G$.
\end{lem}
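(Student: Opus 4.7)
The plan is to realize $G$ in two stages: first build a co-oriented generic surface-with-boundary in a tubular neighborhood of an embedded copy of $G$, then cap off its boundary using homology and surgery in the complement.

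First I embed the graph part of $G$ in $S^3$ so that at each triple vertex the three edges cross transversally with the prescribed consecutive pairing $C(v)$, each branch vertex is the free end of a single edge, and $n$ disjoint circles are added for the double circles of $G$; such an embedding exists because $S^3$ is simply connected and $G$ has bounded valence. Let $N$ be a closed tubular neighborhood of the resulting $1$-complex. Inside $N$, I construct a compact oriented surface-with-boundary $S_0$ assembled from the standard local models of figure \ref{fig:Figure 1}: two transverse disks along each edge, three transverse disks near each triple vertex glued according to $C(v)$, a Whitney umbrella at each branch vertex, and two transverse annuli around each double circle. A grading $g$ of $G$ (which exists by hypothesis) is then used to prescribe a co-orientation on $S_0$: along each edge $e$ the two sheets are co-oriented so that the four local complementary bodies receive winding numbers $g(e)-1,\, g(e),\, g(e),\, g(e)+1$ as in figure \ref{fig:Figure 5}A. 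Gradability is precisely the compatibility condition (preferred edges graded $a(v)+1$, non-preferred edges graded $a(v)$) required for these choices to assemble consistently at each triple vertex into the configuration of figure \ref{fig:Figure 5}B, and by construction the preferred half-edges of the resulting model match the arrows of $G$.

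The boundary $\partial S_0 \subset \partial N$ is a disjoint union of simple closed curves. Running the argument of lemma \ref{Winding} in reverse, the grading extends the winding numbers from the local bodies of $S_0$ to a consistent integer assignment on $S^3 \setminus S_0$; in particular each component of $\partial S_0$ is null-homologous in $X := S^3 \setminus \operatorname{int}(N)$. Concretely, $H_1(X;\Z)$ is generated by meridians of the edges of the embedded graph, and along every edge the two co-oriented sheets of $S_0$ cross each meridian disk in two oppositely oriented points, so every algebraic pairing vanishes. Hence each component of $\partial S_0$ bounds an orientable Seifert surface in $X$.

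The main obstacle will be the final capping step: I must choose such bounding surfaces to be simultaneously embedded, pairwise disjoint, disjoint from $\operatorname{int}(S_0)$, and carrying co-orientations compatible with $S_0$. My plan is to place arbitrary bounding surfaces in general position inside $X$ and then resolve each curve of mutual intersection by oriented surgery (tubing), which is possible because the ambient orientation of $S^3$ together with the compatible co-orientations on the pieces makes every double curve tubeable in the orientation-preserving way. Gluing the resulting embedded oriented surface to $S_0$ produces the required closed oriented generic surface $i\co F \to S^3$ with $M(i) = G$ as an ADG.
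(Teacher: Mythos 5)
Your overall strategy mirrors the paper's: build a local surface model $S_0$ (the paper's ``cross-surface'') in a tubular neighborhood $N$ of an embedded copy of $G$, then cap off $\partial S_0$ by an embedded oriented surface in $X := S^3 \setminus \operatorname{int}(N)$, justified by homology. The local construction is fine. The gap is in the decisive homological step, and it is exactly the step where gradability must do its work.

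You argue that because ``the two co-oriented sheets of $S_0$ cross each meridian disk in two oppositely oriented points, every algebraic pairing vanishes.'' The quantity you are computing is $\operatorname{lk}(m_e,\partial S_0)$ for a meridian $m_e$ of an edge. This is zero, but trivially and uselessly so: the meridian disk $D_e$ lies inside $N$ and is disjoint from $\partial S_0 \subset \partial N$, so $\operatorname{lk}(m_e,\partial S_0)=D_e \cdot \partial S_0 = 0$ for any co-orientation, gradable or not. Since the claim you want to prove is false for non-gradable ADGs, a computation that never sees the grading cannot establish it. Moreover, the meridians lie in $H_1(X;\Z)$, the same group as $[\partial S_0]$; pairing two elements of that group is not what detects vanishing. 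The perfect pairing here is the linking pairing between $H_1(X;\Z)$ and $H_1(N;\Z)$, so what must be checked is $\operatorname{lk}(C,\partial S_0)=0$ for cycles $C$ supported in the embedded graph, i.e.\ for a basis of $H_1(N)$.

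This is what the paper's lemma \ref{partial} actually proves, and it is nontrivial. One pushes $C$ off $S_0$ in the co-orientation direction and counts signed intersections of the pushed curve with $S_0$; the contribution at each triple value $v_i$ is $g(e_i)-g(e_{i-1})$, so the total telescopes to zero around the cycle --- precisely because a grading exists. Your phrase ``running lemma \ref{Winding} in reverse'' captures the right intuition (a grading should yield consistent winding numbers on $S^3 \setminus S_0$, which should force bounding), but that implication is what needs a proof, and your concrete verification computes the wrong pairing.

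A secondary slip: you assert that \emph{each component} of $\partial S_0$ is null-homologous in $X$. That is stronger than needed (only the total class $[\partial S_0]$ must vanish) and is not generally true: individual boundary circles can wind around handles of $N$ nontrivially even when their sum bounds. Once $[\partial S_0]=0$ in $H_1(X;\Z)$, a single embedded orientable surface with boundary exactly $\partial S_0$ exists by the standard Poincar\'e--Lefschetz duality / preimage-of-a-regular-value argument (which is what the paper invokes), so the componentwise Seifert-surface-plus-surgery step is both unnecessary and, as written, resting on an unproven hypothesis.
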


\begin{rem} \label{Connected1}
It may be assumed that $i(F)$ is connected. Otherwise, one of its components will contain the connected intersection graph, and the rest will be embedded connected surfaces in $S^3$. They can removed them by deleting their preimages from $F$.
\end{rem}

I begin with the unique case where the ADG is a double circle. The generic surface from figure \ref{fig:Figure 6}A has a single double circle as its intersection graph. It is the surface of revolution of the curve from figure \ref{fig:Figure 6}B around the blue axis. Both figures have indication for the co-orientation. The intersection graph will be the revolution of the orange dot where the curve intersects itself, and will thus be a circle. The underlying surface is clearly a sphere.

\begin{figure}
\begin{center}
\includegraphics{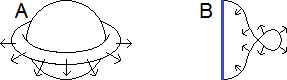}
\caption{A surface whose intersection graph is a double circle}
\label{fig:Figure 6}
\end{center}
\end{figure}

Any other connected ADG is a "graph ADG" - it will have no double circles. In this case, I begin by constructing a part of the matching generic surface - the regular neighborhood of the intersection graph. Li defined something similar in \cite{Li1} (p.3723, figure 2) which he called a "cross-surface", and I will use the same notation.

\begin{defn} \label{CrossSurf}
Given a ADG $G$ that has no DB values and no double circle, a "cross-surface" $X_G$ of $G$ is a shape in $S^3$ that is built via the following two steps:

1) For every triple value $v$ of $G$, embed a copy of figure \ref{fig:Figure 7}A in $S^3$. This shape is called the "vertex neighborhood" of $v$. Similarly, for every cross cap $v$ of $G$, embed a vertex neighborhood that looks like figure \ref{fig:Figure 7}B in $S^3$. Make sure that the different vertex neighborhoods will be pairwise disjoint. These will be the neighborhoods of the actual triple values and branch values of the surface we are constructing. They vertex neighborhoods have little arrows on them which indicate the co-orientation on this part of the surface. It is important to remember which vertex neighborhood corresponds to which vertex of $G$.

\begin{figure}
\begin{center}
\includegraphics{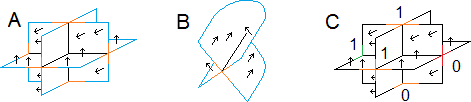}
\caption{The vertices neighborhoods and their gluing zones}
\label{fig:Figure 7}
\end{center}
\end{figure}

Each vertex $v$ is supposed to have either 1 or 6 half-edges enter it. One can see the ends of these half-edges in the illustrations. Note that each half-edge will cross the boundary of the vertex neighborhood at one point. Given such an intersection point, I refer to its regular neighborhood \underline{inside the boundary of the vertex neighborhood} as its "gluing zone". In figure \ref{fig:Figure 7}A and B, I colored the gluing zones in orange and the rest of the boundary the vertex neighborhoods in blue.

The above implies that each such "gluing zone" on the vertex neighborhood of $v$ should correspond to a unique half-edge of $G$ that ends in $v$. The reader has some freedom in choosing which gluing zone corresponds to which half-edge, but in accordance with definition \ref{Graph1}, the following must happen for every triple value, in order for this association to reflect the ADG structure of $G$:

(a) Two gluing zones on opposite sides of the value's neighborhood, like those marked red and green in figure \ref{fig:Figure 7}C, must correspond to a pair of consecutive half-edges.

(b) In compliance with the co-orientation (the little arrows) on the vertex neighborhoods, the zones that the arrows point toward - those marked with the number 1 in figure \ref{fig:Figure 7}C - must correspond to the preferred half-edges. The other zones, marked with 0, will correspond to the non-preferred half-edges. 

Again, it is important to remember which gluing zone corresponds to which half-edge.

\begin{figure}
\begin{center}
\includegraphics{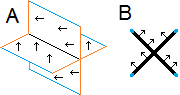}
\caption{The X-bundle of an edge and a cross-section of it}
\label{fig:Figure 8}
\end{center}
\end{figure}

2) Step 2 will realize the edges of $G$. The "ends" of each edge have already been realized inside the corresponding vertex neighborhoods. Each end is realized by the double line between the vertex and the corresponding gluing zone. I want to add the "length of the edge" to our construction. This should be a double line, the intersection of two surfaces, as in figure \ref{fig:Figure 8}A. For each edge $e$ of $G$, embed a matching copy of this shape in $S^3$. A closer look reveals that this shape is a bundle over a closed interval, whose fiber looks like the "X" in figure \ref{fig:Figure 8}B. I therefore call this shape "the X-bundle of $e$". The embedding of the X-bundles must follow the following rule:

\indent(a) The boundary of each X-bundle is composed of two parts - The fibers at the ends of the interval, colored orange, and the (union of the) ends of all the fibers, colored blue. It is natural to identify the two orange fibers with the two ends (half-edges) of the matching edge $e$. Until now, I identified each half-edge of the ADG with both i) a "gluing zone" on the boundary of the neighborhood of some vertex and ii) a fiber at the end of some X-bundle. Make sure to embed the X-bundles so that each end fiber coincides with the matching gluing zone. Additionally, make sure that the "length" of the X-bundle (the X-bundle sans the end fiber) is disjoint from the vertex neighborhoods, and that X-bundles of different edges do not touch one another.

The resulting shape is the cross-surface. It is similar to a generic surface but it has a boundary - the union of all the "blue parts" of the boundaries of the vertex neighborhoods and the X-bundles. The intersection graph of this "generic surface with a boundary" is clearly isomorphic (as a multigraph) to $G$ - I already identified each vertex / edge of it with a unique vertex / edge of $G$ and made sure that each edge ends in the vertices it should properly end in. Rule (a) from step (1) implies that this identification will preserve the consecutive pairs of half-edges. This means that the intersection graph is isomorphic to $G$ as a DG, not just as a multigraph. In order for this to be an ADG isomorphism as well, the embedding of the X-bundles must follow another rule:

(b) To have an ADG structure, the cross surface must have a co-orientation. Note that both the vertex neighborhoods and the X-bundles have arrows on them, which represent co-orientations. When you embed the X-bundles, these co-orientations on them must match, as in figure \ref{fig:Figure 9}A, and unlike figure \ref{fig:Figure 9}B. This way they will merge into a continuous co-orientation on the entire cross surface.

\begin{figure}
\begin{center}
\includegraphics{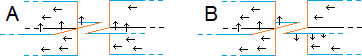}
\caption{The gluing must preserve the orientation}
\label{fig:Figure 9}
\end{center}
\end{figure}

Rule (b) from step (1) implies that the preferred half-edges of the intersection graph will correspond to the preferred half-edges of $G$. This means that the intersection graph will be isomorphic to $G$ as an ADG as well.
\end{defn}

The boundary of the cross-surface is the union of many embedded intervals in $S^3$ - the "blue parts" of the boundaries of the vertex neighborhoods and the X-bundles. Since each end of every interval coincides with an end of one other interval, and the intervals do not otherwise intersect, their union is an embedded compact 1-manifold in $S^3$. The cross-surface induces an orientation on this 1-manifold, the usual orientation that an oriented manifold induces on its boundary. It is depicted in the left part of figure \ref{fig:Figure 10}A. 

I will show that the boundary of the cross surface of a connected ADG $G$ is also the oriented boundary of an embedded surface which is disjointed from the cross surface. It follows that the union of the cross surface and the embedded surface, with the orientation on the embedded surface reversed, will be a closed and oriented generic surface whose intersection graph will be isomorphic to $G$. This will prove lemma \ref{NoDB}.

\begin{figure}
\begin{center}
\includegraphics{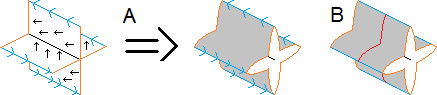}
\caption{Thickening the cross-surface into a handle body, and the handle body's meridians}
\label{fig:Figure 10}
\end{center}
\end{figure}

In order to prove that such an embedded surface exists, I begin by "thickening" the cross surface as in figure \ref{fig:Figure 10}A. Figure \ref{fig:Figure 10}A only shows how to do this to an X-bundle, but you can similarly do this for all the vertex neighborhoods. This results in a handle body $H$ in $S^3$ and our 1-cycle is on its boundary. It will suffice to prove that the 1-cycle is the boundary of some embedded surface in the complement of $H$. This happens iff is the cycle is a "boundary" in the homological sense - the is equal to $0$ in $H_1(\overline{S^3 \setminus H};\Z)$.

Given any loop $\gamma$ in the intersection graph, I define a functional $f_\gamma:H_1(\overline{S^3 \setminus H}$ $;\Z) \to \Z$ such that $f_\gamma(c)$ is the linking number of $\gamma$ and a representative of $c$. It is well-defined, since cycles in $\overline{S^3 \setminus H}$ are disjoint from $\gamma$, and since the linking number of $\gamma$ with any boundary in $H_1(\overline{S^3 \setminus H})$ is $0$, as the boundary bounds a surface in $\overline{S^3 \setminus H}$ which is disjoint from $\gamma$.

In case the genus of $G$, and therefore of the intersection graph and of $H$, is $n$, then the intersection graph has $n$ simple cycles $C_1,...,C_n$, such that each cycle $C_i$ contains an edge $e_i$ that is not contained in any of the other cycles. For every cycle $C_i$ I take a small meridian $m_i$ around the edge $C_i$ (as depicted in red in figure \ref{fig:Figure 10}B). It follows that $f_{c_i}([m_j])=\delta_{ij}$ where $\delta$ is the Kronecker delta function. Additionally, since $\overline{S^3 \setminus H}$ is the complement of an $n$-handle body, $H_1(\overline{S^3 \setminus H}) \equiv \Z^n$. I will prove that:

\begin{lem}
These meridians form a base of $H_1(\overline{S^3 \setminus H})$.
\end{lem}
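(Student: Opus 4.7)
The plan is to package the linking-number functionals $f_{C_1},\dots,f_{C_n}$ into a single homomorphism
\[F\co H_1(\overline{S^3\setminus H};\Z)\to\Z^n,\qquad F(c):=(f_{C_1}(c),\dots,f_{C_n}(c)),\]
and to use the already-established identity $f_{C_i}([m_j])=\delta_{ij}$ to force $F$ to be surjective. A rank count will then immediately upgrade $F$ to an isomorphism carrying the meridians to the standard basis.

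First I would verify that $F$ is a well-defined group homomorphism. Each coordinate is one because the linking form is $\Z$-bilinear on classes with disjoint support, which is precisely the property that gave each individual $f_\gamma$ its well-definedness in the paragraph just before the lemma. Evaluating yields $F([m_j])=e_j$, so the image of $F$ contains every standard basis vector of $\Z^n$, and hence $F$ is surjective.

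Next I would invoke the standing identification $H_1(\overline{S^3\setminus H};\Z)\cong\Z^n$, which was recorded in the text because the complement of a genus-$n$ handlebody in $S^3$ is again a genus-$n$ handlebody. A surjective homomorphism $\Z^n\twoheadrightarrow\Z^n$ is automatically an isomorphism: tensoring with $\Q$ produces a surjection between equidimensional $\Q$-vector spaces, which must be injective, and freeness lifts the injectivity back to $F$. Since $F$ is an isomorphism sending $[m_j]$ to $e_j$, the $[m_j]$ themselves form a $\Z$-basis of $H_1(\overline{S^3\setminus H};\Z)$.

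The main obstacle is not in the argument but in sanity-checking the inputs: one should confirm that each meridian $m_i$ genuinely represents a class in $H_1(\overline{S^3\setminus H};\Z)$ (clear from the construction, since $m_i$ encircles the spine edge $e_i\subset H$ and can be pushed slightly off $H$), and that the Kronecker-delta computation $f_{C_i}([m_j])=\delta_{ij}$ holds with the specific choice of cycles $C_i$ and distinguished edges $e_i$ (which is exactly the point of requiring $e_i$ to lie in $C_i$ but not in any other $C_j$). Once those are granted, the proof reduces to the general observation that an integer matrix pairing of size $n$ that evaluates to the identity forces both of its index systems to be $\Z$-bases.
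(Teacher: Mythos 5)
Your proof is correct, but it takes a cleaner route than the paper's. You package the linking-number functionals into a single homomorphism $F\co H_1(\overline{S^3\setminus H};\Z)\to\Z^n$, observe that the Kronecker-delta identity makes $F$ surjective, and then invoke the general fact that a surjective endomorphism of $\Z^n$ is automatically an isomorphism (via rank-counting after tensoring with $\Q$ and then using torsion-freeness to pull injectivity back). The paper instead works element-by-element: it first uses the pairing to show the $[m_i]$ are $\Z$-independent, concludes that $N=\mathrm{Span}_\Z\{[m_1],\dots,[m_n]\}$ has finite index in $\Z^n$, and then argues by contradiction that any $y\notin N$ can be corrected by a combination of meridians to produce a nonzero element $y'$ that is annihilated by all the $f_{C_i}$; finite index forces $ky'\in N$, whence $ky'=0$ by the independence step, contradicting the torsion-freeness of $\Z^n$. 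Both arguments lean on exactly the same two inputs --- the identification $H_1(\overline{S^3\setminus H};\Z)\cong\Z^n$ and the identity $f_{C_i}([m_j])=\delta_{ij}$ --- but yours replaces the finite-index-plus-torsion gymnastics with a one-line module-theoretic fact, which is arguably more transparent and less error-prone. The paper's version has the minor pedagogical advantage of making the linear independence of the meridians an explicit intermediate step, but that is also immediate from your isomorphism.
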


\begin{proof}
First, I show that the meridians are independent. This is because a boundary in $\overline{S^3 \setminus H}$ would have $0$ as the linking number with every $c_i$, but the linking number of a non-trivial combination $x=\sum a_i[m_i]$ with any $c_j$ will be $a_j$, and for some $j$, $a_j \neq 0$. Second, notice that this implies that $N=Span_{\Z}\{[m_1],...,[m_n]\}$ is a maximal lattice in $H_1(\overline{S^3 \setminus H}) \equiv \Z^n$, and therefore has a finite index.

Third, had $N$ been a strict subgroup of $H_1(\overline{S^3 \setminus H};\Z)$, then there would be an element $y \in H_1(\overline{S^3 \setminus H};\Z) \setminus N$. Define $b_i=lk(y,c_i)$ and $y'=y-\sum_{i=1}^n b_i[m_i]$. $y'$ will have $0$ as the linking number with every $c_i$, but it will not belong to $N$. The finite index of $N$ implies that $ky' \in N$ for some $k$, but $lk(ky',c_i)=k \dot 0=0$ for all $i$, and thus $ky'=0$. This means that $y'$ is a non-zero element of finite order in $H_1(\overline{S^3 \setminus H};\Z) \equiv \Z^n$, but no such element exists.
\end{proof}

\begin{lem} \label{partial}
Let $G$ be a connected ADG that has no DB values, is not a double circle, and is gradable. Then the linking number of the boundary of its cross surface with any simple cycle in the intersection graph of this cross surface is $0$.
\end{lem}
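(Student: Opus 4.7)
My plan is to exhibit an oriented $2$-chain $\Sigma$ in $S^3 \setminus c$ with $\partial \Sigma = \partial X_G$; the existence of such a $\Sigma$ forces $[\partial X_G] = 0$ in $H_1(S^3\setminus c) \cong \Z$, which is exactly the vanishing of $\operatorname{lk}(\partial X_G, c)$.

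The natural starting point is the $2$-chain $X_G$ itself, which has the correct boundary but contains $c$ in its singular set. I would modify $X_G$ in a tubular neighborhood of $c$ to push it off $c$, using the grading. Along the interior of an edge $e$ of $c$, the cross-surface consists of two sheets meeting in a double line, and the grading labels the four local quadrants with winding numbers $g(e)-1,\, g(e),\, g(e),\, g(e)+1$ (the two middle quadrants sharing the value $g(e)$). I would perform the unique local smoothing of the double line that merges the two $g(e)$-valued quadrants. This is the direct analogue of the checkerboard-smoothing Li uses in the unoriented setting; it eliminates $e$ from the singular set without altering $\partial X_G$ and is consistent with the winding-number structure of the complement.

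The crux of the argument is gluing these edge-smoothings across each triple point $v_j$ on $c$. When $e_{j-1}$ and $e_j$ are consecutive at $v_j$, they lie on a common double line through $v_j$, and the smoothings extend through $v_j$ directly: the third sheet at $v_j$ simply intersects the newly-merged sheet in a transverse double line disjoint from $c$. When $e_{j-1}$ and $e_j$ are non-consecutive, the cycle $c$ turns at $v_j$ from one double line to another, and a more intricate local surgery inside a small ball around $v_j$ is required; here the grading constraints among $g(e_{j-1})$, $g(e_j)$, and $a(v_j)$, together with the preferred-half-edge data at $v_j$, single out a coherent local model, which I would verify exists by a case analysis over the finitely many configurations of preferred and non-preferred half-edges at $v_j$.

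Assembled together, these local modifications produce an oriented immersed $2$-chain $\Sigma$ with $\partial \Sigma = \partial X_G$ and $\Sigma \cap c = \emptyset$, completing the proof. The main obstacle is the non-consecutive triple points on $c$: naively smoothing both double lines at such a $v_j$ would attempt to merge all three sheets at once, and the gradability hypothesis is exactly what singles out a consistent replacement local model. The hypothesis that $G$ is connected and not a double circle is used only to guarantee that $c$ is actually a cycle in the graph part, so that the above edge-wise and vertex-wise modifications apply.
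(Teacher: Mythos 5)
Your overall strategy -- exhibit a $2$-chain $\Sigma$ with $\partial\Sigma = \partial X_G$ and $\Sigma\cap c = \emptyset$ -- is a valid reformulation of $\operatorname{lk}(\partial X_G,c)=0$, and it is genuinely dual to the paper's proof, which instead perturbs the $1$-cycle $c$ off $X_G$ and counts signed intersections. However, there is a gap at the triple points of $c$, and it already appears in the case you describe as unproblematic. Suppose $e_{j-1}$ and $e_j$ are consecutive at $v_j$; locally the three sheets are the coordinate planes, $c$ is the $z$-axis, and the smoothing you describe merges the $xz$- and $yz$-sheets away from the $z$-axis. But the third sheet, the $xy$-plane, still meets the $z$-axis at $v_j$. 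More to the point, inside a small ball $B_\epsilon(v_j)$ that third sheet is a disk whose boundary circle separates the two endpoints of the arc $c\cap B_\epsilon(v_j)$ on $S^2_\epsilon$, so \emph{every} relative $2$-chain in $B_\epsilon(v_j)$ with that same boundary circle has nonzero algebraic intersection with $c\cap B_\epsilon(v_j)$. There is no local modification of $X_G$ supported near $v_j$ that both preserves $\partial X_G$ and avoids $c$; the contribution at $v_j$ is an unavoidable $\pm 1$ (or $0$, or $\mp 1$, depending on the preferred-half-edge data), exactly the $g(e_j)-g(e_{j-1})$ that the paper reads off from figures 12--14.

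This also explains why your appeal to gradability is misplaced: at a single triple point the grading is always locally consistent, so no local constraint can ``single out a coherent model.'' Gradability is a \emph{global} hypothesis, and it is used globally in the paper's proof -- the local intersection counts $g(e_j)-g(e_{j-1})$ telescope to $g(e_n)-g(e_0)=0$ precisely because a global grading exists. Indeed, for a non-gradable cycle the same computation yields $\sum_j \Delta g(e_{j-1},v_j,e_j)\neq 0$, so the lemma's conclusion is genuinely false without the hypothesis. Your proposal, if it went through as stated, would prove the linking number is always zero, which cannot happen. To repair the argument you would have to abandon ``$\Sigma\cap c=\emptyset$'' and instead keep track of signed local intersections with $c$ at each $v_j$, at which point you have essentially reproduced the paper's argument from the dual side, but with substantially more local bookkeeping (three sheets and two double lines to control rather than a single perturbed arc).
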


\begin{proof}
%
Let $C$ be a simple cycle in the intersection graph. It is composed of distinct vertices and edges $e_0,v_1,e_1,v_2,...,v_n,e_n=e_0$. Each $v_i$ is a triple value, since it is not a degree-1 vertex. I will perturb $C$ until it's in general position to the cross surface and calculate the intersection number of the "moved $C$" with the cross-surface. This will be equal to the linking number of $C$ and the boundary of the cross-surface.

I begin by pushing each edge $e_i$ away from its matching X-bundle in a direction that agrees with the co-orientation on both of the surfaces that intersect in this X-bundle, as in figure \ref{fig:Figure 11}. 

\begin{figure}
\begin{center}
\includegraphics{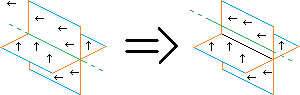}
\caption{Moving the intersection graph away from the cross surface}
\label{fig:Figure 11}
\end{center}
\end{figure}

I need to continue this "pushing" at the vertex neighborhood of each $v_i$. Figures \ref{fig:Figure 12}, \ref{fig:Figure 13} and \ref{fig:Figure 14} demonstrate how to push away the half-edges from their original position. The half-edges I push are colored green, and the arrows on them indicate the direction of the cycle - the half-edge whose arrow points toward (resp. away from) the triple value is a part of $e_{i-1}$ (resp. $e_i$). Continuity dictates that I must always push in the direction indicated by the orientations on the surface as we did in figure \ref{fig:Figure 11}, and figures \ref{fig:Figure 12}, \ref{fig:Figure 13} and \ref{fig:Figure 14} indeed comply with this.

Each of the three figures depict a different situation with regards to which of the two half-edges, if any, is preferred at $v_i$. Figure \ref{fig:Figure 12} depicts the case where both the half-edges are preferred. In this case, after being pushed away from the cross-surface, $C$ will not intersect the cross surface at the neighborhood of $v_i$.

\begin{figure}
\begin{center}
\includegraphics{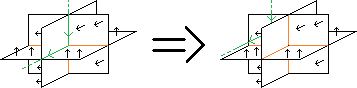}
\caption{Moving the intersection graph away from a triple value, when both sides are preferred}
\label{fig:Figure 12}
\end{center}
\end{figure}

Figure \ref{fig:Figure 13} depicts the case where the half-edge that is a part of $e_{i-1}$, the one entering the triple value, is not preferred, and the half-edge that is a part of $e_i$, the one exiting the triple value, is preferred. In this case, after being pushed away from the cross-surface, $C$ will intersect the cross-surface once, and it will do so agreeing with the direction of the co-orientation on the surface (that's why there is a little $+1$ next to the intersection.) 

\begin{figure}
\begin{center}
\includegraphics{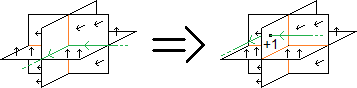}
\caption{Moving the intersection graph away from a triple value, when only one side is preferred}
\label{fig:Figure 13}
\end{center}
\end{figure}

Figure \ref{fig:Figure 13} depicts the case where the two half-edges are not consecutive, but even if they were, the same thing would happen - $C$ would intersect the cross-surface once, in agreement with the co-orientation. The only difference would be that the half-edge that was exiting $v_i$ would have continued leftwards instead of turning outwards towards the reader. Furthermore, had the half-edge coming from $e_{i-1}$ been preferred and the one coming from $e_i$ hadn't, then the pushing would still occur as in figure \ref{fig:Figure 13}, except that the arrows on the green line would point the other way. In this case $C$ would still intersect the cross surface once after the pushing, but it would be against the direction on the co-orientation.

Lastly, figure \ref{fig:Figure 14} depicts the case in which both half-edges are not preferred. In this case, after being pushed away from the cross-surface, $C$ will intersect the cross-surface twice in the neighborhood of $v_i$. One intersection, marked $+1$, is in the direction of the co-orientation, and the other intersection, marked $-1$, is against it.

\begin{figure}
\begin{center}
\includegraphics{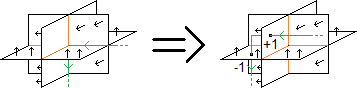}
\caption{Moving the intersection graph away from a triple value, when both sides are non-preferred}
\label{fig:Figure 14}
\end{center}
\end{figure}

Let $G$ be a grading of the intersection graph. Since $e_{i-1}$ and $e_i$ share a vertex, the difference between their grading is at most $1$. If $g(e_i)-g(e_{i-1})=1$ (resp. $-1$), then $e_i$ (resp $e_{i-1}$) is preferred and $e_{i-1}$ (resp. $e_i$) is not. I just showed that in this case the signed number of intersections between the "pushed away" $C$ and the cross-surface is $1$ (resp. $-1$). If $g(e_i)-g(e_{i-1})=0$ then either both $e_i$ and $e_{i-1}$ are preferred, in which cases $C$ does not intersect the cross-surface around $v_i$, or they are both non-preferred, in which case they intersect once with and once against the co-orientation.

In all cases the signed number of intersections between the pushed $C$ and the cross-surface around $v_i$ is equal to $g(e_i)-g(e_{i-1})$. The pushed $C$ does not intersect the cross-surface anywhere else, and so their intersection number is $\sum_{i=1}^n(g(e_i)-g(e_{i-1}))=g(e_n)-g(e_0)=0$. Since $C$ did not cross the the boundary of the cross surface during the pushing, this ($0$) is equal to the linking number of $C$ and the boundary.
\end{proof}

Having proven lemmas \ref{partial} and \ref{NoDB}, I can now prove the "if" direction of the articles of theorem \ref{ADGThm}:

\begin{proof}
1) Each connected component $G_k$ of $G$ is gradable and lacks DB values, and thus has a realizing surface $i_k:F_k \to S^3$. Simply remove a point from $S^3 \setminus i_k(F_k)$ to regard $i_k$ as a surface in $\R^3$, and embed these copies of $\R^3$ as disjoint balls in the interior of $M$.

2) If $G$ has no DB values the proof of (1) holds. Otherwise, define a new ADG $G'$ in which each DB value of $G$ is replaced with a branch value. Realize $G'$, via (1), with a closed generic surface $i\co F \to S^3$ for which $F$ is connected.

Take a small ball around each of the branch values that replaces a DB value of $G$, as in figure \ref{fig:Figure 15}A. Figure \ref{fig:Figure 15}B depicts the intersection of the surface with the boundary of the ball. It is an "8-figure" as in figure \ref{fig:Figure 15}C, and the orange dot (the intersection in the 8-figure) is the intersection of the boundary with the intersection graph. If you remove this ball from $S^3$, then instead of ending at the cross cap, the edge will end at the orange dot in the 8-figure, which will become a DB value. It follows that after removing all these balls, the intersection graph will be an ADG isomorphic to $G$.

\begin{figure}
\begin{center}
\includegraphics{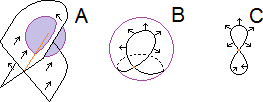}
\caption{Turning a branch value into a DB value}
\label{fig:Figure 15}
\end{center}
\end{figure}

The generic surface now lays in $S^3$ minus some number of balls. Choose one spherical boundary component and connect it via a path to each of the other ones. Make sure that the path is in general position to the generic surface - it may intersect it only at faces and will do so transversally. Thicken these paths into narrow 1-handles and remove them from the 3-manifold. This may remove some disc from the surface, but will not effect its intersection graph. You now have a generic surface that realizes $G$ in $D^3$. Remove a point from the boundary of $D^3$, making it diffeomorphic to the closed half space $\{(x,y,z) \in \R^3|z \geq 0\}$ which can be properly embedded in any 3-manifold with a boundary. This finishes the proof.
\end{proof}

\begin{rem} \label{Connected2}
If needed, you can make sure that the underlying surface $F$ is connected. This involves modifying the surface in two ways.

a) You can modify the proof of article (1) to produce a surface $i\co F \to M$ with a connect image.
Begin by assuming that the image of each $i_k$ is connected via remark \ref{Connected1}. Pick a face $v_k$ in each $i_k$. The co-orientation on $v_k$ points towards a body $U_k$. When you remove a point from $S^3$, make sure you remove it from $U_k$. This way, $U_k$ (minus a point) becomes the exterior body of $i_k:F_k \to \R^3$. When you embed the copies of $\R^3$ in $M$, the co-orientation on all $v_k$s will point towards the same connected component of $M \setminus \bigcup i_k(F_k)$. You may connect each $V_k$ to $V_{k+1}$ with a handle going through this component as in figure \ref{fig:Figure 18} (ignore the letters "A" and "B" in the drawing). This connects the images of the $i_k$s without sacrificing the orientation or changing the intersection graph.

In article (2) you take a surface from (1) and modify it. It is clear that none of these modifications can disconnect the image of the surface, and so (2) may also produce surfaces with a connected image.

b) In case $i(F)$ is connected but $F$ has more than one connected component then the images of some pair of connected components must intersect generically at a double line. This is depicted in the left part of figure \ref{fig:Figure 16}, where the vertical surface comes from one connected component of $F$ and the horizontal comes from another. Connect them via a handle in an orientation preserving way, as in the right part of figure \ref{fig:Figure 16}, thereby decreasing the number of connected components of $F$. Continue in this manner until $F$ is connected. 
\end{rem}

\begin{figure}
\begin{center}
\includegraphics{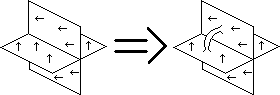}
\caption{Turning a disconnected surface into a connected one}
\label{fig:Figure 16}
\end{center}
\end{figure}

\section{Infinite homology}

In this section I deal with 3-manifold with whose first homology group contains an element of infinite order.

\begin{thm} \label{ADGThm2}
If $M$ is an oriented, compact and boundaryless 3-manifold with an infinite first homology group, then any ADG $G$ with no DB values can be realized as the intersection graph of an oriented generic surface in $M$. If $M$ has a boundary then any ADG $G$ can be realized in $M$.
\end{thm}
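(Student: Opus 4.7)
Proof proposal. My plan is to augment the Section~3 construction by using infinite-order elements of $H_1(M;\mathbb{Z})$ to absorb the homological obstruction that, in $S^3$, forced $G$ to be gradable. As in the proof of Theorem~\ref{ADGThm}, I would first reduce to the connected case (disjoint gradable components go into disjoint balls via Theorem~\ref{ADGThm}), and the DB-value case of part~2 is handled at the end by the same ``remove a ball around a branch value'' trick used in part~(2) of Theorem~\ref{ADGThm}. So the core task is to realize a connected non-gradable ADG $G$ without DB values.

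I would begin by constructing the cross-surface $X_G$ inside a ball $B_0\subset M$ as in Definition~\ref{CrossSurf} and thickening it to a handlebody $H\subset B_0$. Since $B_0$ is a ball, a Mayer--Vietoris computation applied to the cover $\{B_0,\,M\setminus\mathrm{int}(B_0)\}$ gives
\[
H_1(M\setminus\mathrm{int}(H);\mathbb{Z}) \;\cong\; H_1(M;\mathbb{Z})\,\oplus\,\mathbb{Z}^n,
\]
where $\mathbb{Z}^n$ is spanned by meridians $m_1,\dots,m_n$ to a set of edges $e_1,\dots,e_n$ chosen one per simple cycle $C_i$ in a basis of $H_1(G;\mathbb{Z})$. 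By the linking-number argument of Lemma~\ref{partial}, the class $[\partial X_G]$ in this group has trivial $H_1(M;\mathbb{Z})$-component (since $\partial X_G$ lies in a ball) and meridian component $(d_1,\dots,d_n)$, where $d_i$ is the grading difference of $C_i$. In $S^3$ this is the only obstruction to capping off $\partial X_G$ and it vanishes exactly when $G$ is gradable; in general $M$ it is again the only remaining obstruction.

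The hypothesis on $H_1(M;\mathbb{Z})$, via Poincar\'e--Lefschetz duality, supplies an embedded closed oriented surface $\Sigma\subset M\setminus\mathrm{int}(H)$ representing an infinite-order class in $H_2(M;\mathbb{Z})$, along with an embedded smooth loop $\delta\subset M$ meeting $\Sigma$ transversely in a single point. I would then modify $X_G$ by tubing it to $|d_i|$ parallel copies of $\Sigma$ for each $i$ with $d_i\neq 0$; each tube is a thin embedded annulus attached at an interior disk on a face of $X_G$ near $C_i$ at one end and at a disk on a copy of $\Sigma$ at the other, routed once along a push-off of $\delta$ with the sign chosen to match $d_i$ and the co-orientation. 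The new surface $X_G'$ has the same intersection graph $G$ (tubes and $\Sigma$-copies are disjoint from $H$ away from the attaching disks, so they introduce no new singularities), but $\partial X_G'=\partial X_G$ is now null-homologous in the complement of a thickening of $X_G'$, since the tubes plus $\Sigma$-copies together provide a $2$-chain whose boundary in that complement absorbs $(d_1,\dots,d_n)$. The closing argument at the end of the proof of Lemma~\ref{NoDB} then yields an embedded oriented capping surface, producing a closed oriented generic surface realizing $G$ in $M$.

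The main obstacle will be the tubing step: arranging the tubes so that they are embedded, pairwise disjoint, disjoint from $H$ away from the attaching disks, compatible with the co-orientation on $X_G$ and on $\Sigma$, and contribute exactly the required linking numbers. This needs a careful general-position choice of $\Sigma$ and $\delta$ inside $M\setminus\mathrm{int}(H)$ and a verification, via the Mayer--Vietoris splitting above, that the residual class in both the meridian summand and the $H_1(M;\mathbb{Z})$ summand of $H_1$ of the new complement is indeed trivial.
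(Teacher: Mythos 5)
There is a genuine gap, and it is at the heart of your tubing step. The tubes are attached to interior disks on \emph{faces} of $X_G$, so they leave the intersection graph---in particular every cycle $C_i$---entirely inside the ball $B_0$. Hence $[C_i]=0$ in $H_1(M;\Z)$, and the linking number $\mathrm{lk}_M(C_i,\partial X_G)$ is a genuinely well-defined integer (independent of the choice of $2$-chain bounding $\partial X_G$, since $[C_i]$ pairs trivially with every class in $H_2(M;\Z)$). The computation in Lemma~\ref{partial}, with $X_G$ serving as the $2$-chain, gives $\mathrm{lk}_M(C_i,\partial X_G)=d_i$. But if $\partial X_G'=\partial X_G$ bounded an embedded surface $S$ in the complement of a thickening of $X_G'$, then $S$ would be disjoint from $C_i\subset X_G\subset X_G'$ and we would also get $\mathrm{lk}_M(C_i,\partial X_G)=C_i\cdot S=0$. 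So $d_i=0$, contradicting non-gradability. Tubing to copies of $\Sigma$ does not touch this: $\partial X_G$ and $C_i$ are unchanged, the tubes and $\Sigma$-copies become part of the new surface and therefore cannot serve as part of a capping $2$-chain in the complement, and enlarging the surface can only shrink the complement, never turn a non-bounding loop into a bounding one. The phrase ``the tubes plus $\Sigma$-copies together provide a $2$-chain whose boundary in that complement absorbs $(d_1,\dots,d_n)$'' is where the argument breaks down.

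The paper takes a genuinely different route precisely because the whole ``cross-surface in a ball plus cap'' strategy is a dead end for a non-gradable $G$: to realize such a $G$, some cycle of the intersection graph \emph{must} represent an infinite-order class of $H_1(M;\Z)$, which forces the intersection graph itself to leave any ball. The paper achieves this by induction on the genus of the graph part: cut an edge $e$ lying on a problematic cycle to obtain a lower-genus $G'$, realize $G'$ (inductively) by a \emph{closed} generic surface with a single body (Lemmas~\ref{Once} and \ref{OneBody}, which tube a face to a non-separating surface $S$), then replace the two resulting branch values by a figure-$8$ bundle routed through the connected body. The reconnected edge $e$---and hence the cycle $C$---now runs through that bundle and can (and, when $d_C\neq 0$, must) wind nontrivially in $M$, which is exactly how the linking obstruction is evaded. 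If you want to repair your argument, the modification has to change the embedding of the intersection graph itself (not just of a face), so that the offending cycles acquire nontrivial homology; adding homologically nontrivial handles to faces alone cannot do this.
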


The proof relies on two lemmas:

\begin{lem} \label{Once}
$M$ has a connected, compact, oriented and properly embedded surface $S \subseteq M$ that is non-dividing ($M \setminus S$ is connected).
\end{lem}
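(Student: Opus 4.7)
The plan is to exploit the identification $H^1(M;\Z)\cong[M,S^1]$ and to obtain $S$ as a connected component of the preimage of a regular value of a suitable map $f\co M\to S^1$. This is a classical construction going back to Stallings; the work in this particular setting is just to ensure that the component one extracts is non-dividing.

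Since $M$ is compact, $H_1(M;\Z)$ is finitely generated, and its containing an element of infinite order means its free rank is at least $1$. By the universal coefficient theorem, $H^1(M;\Z)\cong\mathrm{Hom}(H_1(M;\Z),\Z)$ is therefore nonzero. I choose a nonzero class $\alpha\in H^1(M;\Z)$ and a smooth representative $f\co M\to S^1$. By Sard's theorem applied to $f$ and, if $\partial M\neq\emptyset$, also to $f|_{\partial M}$, I can pick a common regular value $p\in S^1$, so that $\tilde S:=f^{-1}(p)$ is a compact, properly embedded $2$-submanifold with $\partial\tilde S=\tilde S\cap\partial M$. It is orientable: its normal bundle in $M$ is the pullback under $f$ of the oriented (trivial) normal bundle of $p$ in $S^1$, which co-orients $\tilde S$ inside the oriented ambient $M$ and hence orients it.

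To extract a non-dividing component I pick a smooth loop $\gamma$ in the interior of $M$ with $\alpha([\gamma])\neq 0$, which exists since $\alpha\neq 0$. After a small perturbation making $\gamma$ transverse to $\tilde S$, the signed intersection number $\gamma\cdot\tilde S$ equals the degree of $f\circ\gamma\co S^1\to S^1$, which in turn equals $\alpha([\gamma])\neq 0$. Decomposing $\tilde S=S_1\sqcup\cdots\sqcup S_m$ into connected components, $\sum_i\gamma\cdot S_i=\gamma\cdot\tilde S\neq 0$, so there is some index $i_0$ with $\gamma\cdot S_{i_0}\neq 0$, and I set $S:=S_{i_0}$.

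Finally, I would argue that this $S$ is non-dividing. Assume for contradiction that $M\setminus S=A\sqcup B$ with $A$ and $B$ nonempty, open and disjoint. Because $S$ is two-sided (co-oriented) in $M$, every transverse crossing of $\gamma$ passes between $A$ and $B$; since $\gamma$ is a closed loop, the crossings from $A$ to $B$ and from $B$ to $A$ are equinumerous, and they contribute to $\gamma\cdot S$ with opposite signs, forcing $\gamma\cdot S=0$, a contradiction. Hence $S$ is connected, compact, oriented, properly embedded and non-dividing, as required. The only real subtlety in this argument is the proper-embeddedness and orientability of $\tilde S$ in the presence of a boundary, which is handled by applying Sard transversally to $\partial M$ and by the pullback-of-orientation argument above; everything else is a straightforward consequence of the dictionary between $H^1$ and maps to $S^1$ together with standard transversality.
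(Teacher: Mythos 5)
Your proof is correct, and it takes a genuinely different route from the paper's. The paper argues by contradiction: it assumes every connected, compact, oriented, properly embedded surface divides $M$, observes that a dividing surface bounds and hence represents $0$ in $H_2(M;\Z)$, invokes the fact that $H_2(M;\Z)$ is generated by such classes to conclude $H_2(M;\Z)=0$, and then derives a contradiction from Poincar\'e duality, which identifies $H_2(M;\Z)$ modulo torsion with $H_1(M;\Z)$ modulo torsion. You instead construct the surface directly: you use $H^1(M;\Z)\cong[M,S^1]$ and transversality to produce a properly embedded oriented surface $\tilde S=f^{-1}(p)$, and then use the fact that $\gamma\cdot\tilde S=\alpha([\gamma])\neq 0$ for a suitable loop $\gamma$ to extract a component $S$ with $\gamma\cdot S\neq 0$; the intersection-number argument (a loop crossing a two-sided separating surface has algebraic intersection $0$ with it) then shows $S$ is non-dividing. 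Your approach is more constructive and self-contained: it spells out the Pontryagin--Thom step that the paper's ``$H_2$ is generated by embedded surfaces'' step leaves implicit, and the intersection-number criterion for non-dividing is a clean positive certificate that the paper's contrapositive does not need. The paper's version is shorter and cleaner once you grant the black-box facts about $H_2$ and duality, but yours avoids them and would also make it more immediate how to use the resulting $S$ downstream (e.g.\ in Lemma \ref{OneBody}), since it comes with the loop $\gamma$ witnessing that it is non-dividing.
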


\begin{proof}
$H_2(M;\Z)$ is generated by 2-cycles of the form $[S]$ where $S \subseteq M$ is a connected, compact, oriented and properly embedded surface. If the statement of the lemma is false, then each such surface divides $M$ into two connected components and will therefore be a boundary in $H_2(M;\Z)$. This implies that $H_2(M;\Z) \equiv \{0\}$. According to Poincar\'{e}'s duality, $\{0\} \equiv H_2(M;\Z)/Tor(H_2(M;\Z)) \equiv H_1(M;\Z)/Tor(H_1(M;\Z))$. This implies that every element of $H_1(M;\Z)$ is of finite order, contradicting the assumption.
\end{proof}

\begin{lem} \label{OneBody}
If $G$ is gradable, then there is a generic surface $i\co F \to M$, which realizes $G$, and for which $M \setminus i(F)$ is connected (equivalently, $i$ has only one body).
\end{lem}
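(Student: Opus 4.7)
The plan is first to realize $G$ inside a small ball in $M$, then to merge the resulting bodies of the complement into a single one by attaching oriented $1$--handles (tubes) to the surface. By Theorem \ref{ADGThm} applied to a small open ball $B$ in the interior of $M$ (which has trivial first homology), there is an oriented generic surface $i_0\co F_0 \to M$ realizing $G$ with $i_0(F_0)\subseteq B$. Label its bodies $U_0, U_1, \ldots, U_k$ with $U_0\supseteq M\setminus B$.

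I would reduce the body count using two kinds of oriented $1$--handle attachments, both of which preserve the intersection graph because each tube's interior lies in a single body, disjoint from the rest of $i_0(F_0)$. An \emph{internal} tube has its two feet on two distinct faces of some body $U$ whose co-orientations agree on the $U$-side and which bound two distinct other bodies $U_a, U_b$; its effect is to merge $U_a$ with $U_b$. Since the bodies inside $B$ carry well-defined winding numbers (from the grading of $G$ and the triviality of $H_1(B)$), the co-orientation on each face points from the lower-winding body to the higher-winding one, which makes the matching-face condition concrete and keeps the internal tubes orientation-preserving.

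A \emph{global} tube uses the non-separating surface $S$ provided by Lemma \ref{Once}, isotoped so that $S\subseteq U_0\cap(M\setminus B)$ and hence disjoint from $i_0(F_0)$. I form a connect sum of the current surface with $S$ by an oriented $1$--handle inside $U_0$ running from a face of the current surface to $S$. Because $S$ is non-separating in $M$ and $U_0\supseteq M\setminus B\supseteq S$, the set $U_0\setminus S$ is connected, so after the surgery the body on the far side of the tubed face is identified with $U_0$, and the body count drops by one. Alternating internal and global tubes---with parallel copies of $S$ drawn from its product neighborhood whenever multiple global merges are needed---drives the body count down to one and produces the required generic surface $i\co F \to M$.

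The principal technical difficulty is to verify orientation preservation at every handle attachment and termination of the procedure. Orientation preservation for an internal tube is automatic once a face pair with matching co-orientations is chosen, and for a global tube it is arranged by reversing the chosen orientation of $S$ if necessary. Termination is where Lemma \ref{Once} is truly essential: the global step, whose availability depends on the non-separating surface $S$, is exactly what handles the configurations no internal tube can resolve---most notably the two-body case, in which the current surface separates $M$ and no internal merger is possible.
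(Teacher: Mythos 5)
Your argument is correct in its essentials and proves the lemma, but it takes a genuinely different and slightly clunkier route than the paper's.

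The paper's proof performs a single connect sum of the realizing surface (built in an $\R^3$ or half-space $M'$ disjoint from $S$) with the non-dividing surface $S$. The resulting face $V\# S$ has the \emph{same} body $A$ on both of its sides. This is the crucial device: when you then want to absorb another body $B$ adjacent to $A$, you run a handle through $A$ from $W$ (the face between $A$ and $B$) to $V\# S$, and because $A$ is on both sides of $V\# S$, you are free to land on \emph{whichever} side of $V\# S$ makes the co-orientations agree. So every subsequent merge is a plain internal handle with no co-orientation obstruction, and a single copy of $S$ suffices. Your proposal never exploits the fact that the global tube also produces a double-sided face; instead you keep your \emph{internal} tubes hostage to a matching co-orientation condition and fall back on fresh parallel copies of $S$ whenever that condition fails. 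That works, but it trades one clean invariant (``there is a face with the same body on both sides'') for bookkeeping about parallel copies of $S$ and disjointness of tubes, and makes your ``internal'' tubes essentially redundant --- global tubes alone, applied to any body adjacent to $U_0$, already terminate by induction on the body count.

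Two spots deserve tightening. First, your claim that ``$U_0\setminus S$ is connected'' is true but does not follow immediately from $S$ being non-separating in $M$; one needs the observation that if $S$ separated $U_0$ it would bound (over $\Z_2$) in $\overline{U_0}$ and hence in $M$, contradicting the non-dividing hypothesis. Second, when you pass to a parallel copy $S'$ for the next global tube, you must argue that $S'$ is still non-separating in the evolving $U_0$ (after $S$ has been absorbed into the surface and a solid tube removed); the same homology argument handles this, since $[S']=[S]\neq 0$ in $H_2(M;\Z_2)$ and the inclusion of $\overline{U_0}$ into $M$ kills any bounding relation. Neither gap is fatal, but both are exactly the kind of thing the paper's ``double-sided face'' invariant lets you sidestep, which is the main advantage of its route.
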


\begin{proof}
Take the generic surface $S \subseteq M$ from lemma \ref{Once}, and a subset $M' \subseteq M$ that is disjoint from $S$ and is homomorphic to a half-space (if $M$ has a boundary) or to $\R^3$ (if it does not). According to theorem \ref{ADGThm}, there is a generic surface $i\co F \to M'$ which realizes $G$. Connect some face $V$ of the generic surface to $S$ with a handle, as in figure \ref{fig:Figure 17} (the handle does not intersect $i(F)$ or $S$). If needed, reverse the co-orientation of $S$ so that the resulting surface will be continuously co-oriented.

\begin{figure}
\begin{center}
\includegraphics{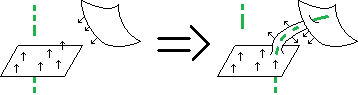}
\caption{Giving the surface a face that has the same body on both sides}
\label{fig:Figure 17}
\end{center}
\end{figure}

You now have a new generic surface $i':F \# S \to M$ whose intersection graph is still isomorphic to $G'$. Since $S$ was non-dividing, the connected sum of $V$ and $S$ is a face of this surface that has the same body $A$ on both sides (as indicated by the green path which does not intersect the surface in figure \ref{fig:Figure 17}). If this is the surface's only body then you are done. If not, you can decrease the number of bodies as follows:

Let $B$ be another body of the surface that is adjacent to $A$. Connect the face $W$ which separates $A$ and $B$ to the face $V \# S$ with a path that goes through $A$, and does not intersect our generic surface except at the ends of the path. Since $V \# S$ has $A$ on both sides, you can approach it from either side. If the arrows on $W$ points toward $A$ (resp. $B$), make sure the path enters $V \# S$ from the direction the arrows point towards (resp. point away from). Next, attach the faces $V$ and $W$ with a handle that runs along this path. Figure \ref{fig:Figure 18} depicts the case there the arrows on $W$ point towards $A$. Reverse the direction of all arrows to get the other case.

\begin{figure}
\begin{center}
\includegraphics{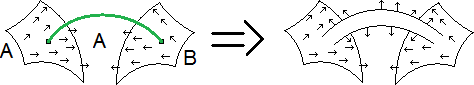}
\caption{Reducing the number of bodies}
\label{fig:Figure 18}
\end{center}
\end{figure}

The resulting generic surface has one body less since $A$ and $B$ have merged. It still realizes $G'$ and has a face with the same body on both sides. Repeat this process until you get a surface with only one body.
\end{proof}

I will now prove theorem \ref{ADGThm2}:

\begin{proof}
Let $H$ be the graph part of $G$ - $G$ without the double circles. I use induction on the genus of $H$. If the genus is $0$, then $G$ is the union of a forest with some double circles, and remark \ref{tree} implies that it is gradable and the theorem follows from lemma \ref{OneBody}. If the genus of $H$ is positive, pick an edge $e \in H$ such that $H \setminus \{e\}$ has a smaller genus. This means that removing $e$ does not divide the connected component of $H$ that contains $e$. Note that both ends of $e$ are on triple values, since branch values and DB values are of degree 1 and removing their single edge divides the graph.

Define a new ADG $G'$ in the following manner: start with a copy of $G$ and cut the edge $e$ in the middle. Instead of $e$ you will get two "new edges" $e_1$ and $e_2$. Each $e_i$ has one end on a new branch value while the other end "replaces" one of the ends of $e$ - it enters the triple value that the said end of $e$ was on, and it retains the ADG data - it is preferred iff the half-edge of $e$ was preferred, and it has the same consecutive half-edge. Figure \ref{fig:Figure 19} depicts the two possible ways to construct $G'$ from $G$. 

\begin{figure}
\begin{center}
\includegraphics{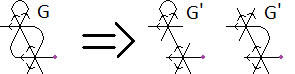}
\caption{Cutting an edge and adding two branch values to an arrowed daisy graph}
\label{fig:Figure 19}
\end{center}
\end{figure}

$H'$, the graph structure of $G'$, has a lower genus then $H$. I assume, by induction, that there is a generic surface in $M$ that realizes $G'$ and \underline{has only one body}. I will modify this surface so that it realizes $G$. Observe the new branch values at the ends of $e_1$ and $e_2$. Change the surface in a small neighborhood of each branch value as per figure \ref{fig:Figure 20}A, deleting the branch value and leaving instead a "figure 8 boundary" of the surface.

\begin{figure}
\begin{center}
\includegraphics{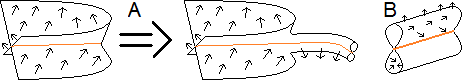}
\caption{Removing two branch values from a generic surface and restoring the previously cut edge}
\label{fig:Figure 20}
\end{center}
\end{figure}

This figure 8 boundary is depicted in figure \ref{fig:Figure 15}C. Take a bundle over an interval whose fibers are "8-figures", as in figure \ref{fig:Figure 20}B, and embed it in $M$ in such a way that its end-fibers coincide with the said "figure 8 boundaries" (in a way that preserves the arrows of the co-orientation). Since the complement of the original surface was connected, you can make sure that the bundle does not intersect the surface anywhere except its ends. This closes $e_1$ and $e_2$ into one edge, reversing the procedure that created $G'$ from $G$, and so this new surface realizes $G$ while still having only one body. The proof follows by induction.
\end{proof}

\begin{rem} \label{Connected3}
It is possible once more to make sure that the underlying surface $F$ is connected. Firstly, you may connect the different connected components of $i(F)$ via handles, similarly to the way you connected faces in the proof of lemma \ref{OneBody}. You may then proceed as in lemma \ref{Connected2}(b).
\end{rem}

\section{Ordered daisy graphs}

In the last section, I will refine the enhanced graph structure of the intersection graph of a generic surface from an ADG to a structure I call an ordered daisy graph, or ODG, which encodes more information regarding the topology of the surface.

For motivation, attempt to construct a cross-surface for an ADG $G$ as per definition \ref{CrossSurf}. Figure \ref{fig:Figure 21}A depicts a vertex neighborhood of a triple value $v$. Its preferred half-edges are indexed as $+1,+2,+3$ (and the corresponding non-preferred ones as $-1,-2,-3$). While constructing the cross-surface, you glue the end of some X-bundle to each of these half-edges. Let $\sigma \in S_3$ be the even permutation $(1,2,3)$. If, for each $k=1,2,3$, you take the "end of an X-bundle" that is supposed to be glued to the half-edge $\pm k$, and instead glue it to the half-edge $\pm \sigma(k)$, then you would end up with essentially the same cross-surface.

\begin{figure}
\begin{center}
\includegraphics{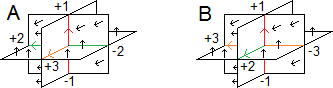}
\caption{Even and odd permutations of a triple value neighborhood}
\label{fig:Figure 21}
\end{center}
\end{figure}

By "essentially the same cross-surface", I mean that you could pick neighborhoods $H_1 \subseteq S^3$ of one cross-surface and $H_2 \subseteq S^3$ of the other so that there is an orientation preserving homeomorphism $f:H_1 \to H_2$ that sends the first cross-surface to the second one in a manner preserving the co-orientation on them. In general, It may be impossible to extend $f$ to all of $S^3$, since the way to embed the X-bundles in $S^3$ in definition \ref{CrossSurf} allows the reader to "knot" the cross-surface as they see fit. In this case, $f$ acts as a rotation in the neighborhood of $v$ (the rotation that sends figure \ref{fig:Figure 21}A to figure \ref{fig:Figure 21}B). After the rotation, it may need to move the X-bundles of the cross-surface in order to knot them in a different way.

Using an odd permutation instead of $\sigma$ might produce a fundamentally different cross-surface. While I can define a similar $f$ via a reflection on the neighborhood of $v$ (such as the reflection that sends figure \ref{fig:Figure 21}A to figure \ref{fig:Figure 21}C), this $f$ does not preserve the orientation on $H_1$. Furthermore, the two cross-surfaces may not even be homeomorphic subsets of $S^3$. It's possible to complete each cross-surface into a generic surface. This will produce two generic surfaces $i_k:F_k \to S^3$ ($k=1,2$) such that, on the one hand $M(i_1)$ and $M(i_2)$ are isomorphic as ADGs but on the other hand the neighborhoods of $M(i_1)$ and $M(i_2)$ are topologically distinct. I refine the enhanced graph structure of the intersection graph so that it reflects the difference between them.

\begin{defn}
1) I add, as a 7th entry to the 6-tuple of the ADG, a function that assigns each triple value with an indexing of its preferred edges, which is unique up to an even permutation, with which the triple value looks like figure \ref{fig:Figure 21}A (and unlike figure \ref{fig:Figure 21}C). I refer to this as the Ordered Daisy Graph (ODG) structure of the intersection graph.

2) A cross-surface of an ODG $G$ is defined similarly to a cross-surface of an ADG (definition \ref{CrossSurf}) with the additional requirements that the vertex neighborhood of each triple value comes with indexed half-edges as per figure \ref{fig:Figure 21}A and that, when you attach the X-bundles to such a vertex neighborhood you comply with the indexing of the ODG structure.
\end{defn}

The benefit of using ODGs is that all the cross-surfaces one might produce for the same ODG are essentially the same. All of the results in the previous sections, and in particular theorems \ref{ADGThm} and \ref{ADGThm2}, can be rephrased to use ODGs instead of ADGs and the same proofs will still apply.
 
\section*{Acknowledgments}

I would like to thank Dr. Tahl Nowik, my Ph.D adviser, for introducing me to this subject and to the wonderful and intriguing field of geometric topology. And my wife, Hila, for editing my work.

\bibliographystyle{plain}
\bibliography{MyBib}


\end{document}